\documentclass[10 pt]{amsart}

\usepackage{latexsym, amsmath, amssymb, longtable, booktabs, amscd, microtype, booktabs, mathrsfs, array}
\usepackage[square]{natbib}
\usepackage{graphicx}
\usepackage[dvipsnames]{xcolor}
\usepackage{caption}
\bibliographystyle{abbrvnat}
\usepackage[margin=2.5cm]{geometry}

\usepackage{hyperref}
\hypersetup{
    colorlinks,
    citecolor=blue,
    filecolor=black,
    linkcolor=red,
    urlcolor=black
}	
\bibpunct{[}{]}{,}{n}{}{;} 

\numberwithin{equation}{section}


\newcommand{\prid}{\mathfrak{p}}
\newcommand{\zz}{\mathbb{Z}}
\newcommand{\cc}{\mathbb{C}}
\newcommand{\qq}{\mathbb{Q}}
\newcommand{\ff}{\mathbb{F}}

\newcommand{\rr}{\mathbb{R}}
\newcommand{\cX}{\mathscr{X}}
\newcommand{\cO}{\mathcal{O}}
\newcommand{\fin}{\text{fin}}
\newcommand{\tcX}{\widetilde{\cX}}
\newcommand{\hfal}[1]{h_{\mathrm{Fal}}\left( #1 \right)}
\newcommand{\hNT}{h_{\mathrm{NT}}}
\newcommand{\gcan}{\mathfrak{g}_{\mathrm{can}}}
\newcommand{\cY}{\mathscr{Y}}
\newcommand{\cZ}{\mathscr{Z}}
\newcommand{\tcZ}{\widetilde{\cZ}}
\newcommand{\cW}{\mathscr{W}}

\newcommand{\rhat}{\widehat{\cO}_{K}}
\newcommand{\fancyp}{\mathfrak{\pi}}
\newcommand{\dfal}{\delta_{\mathrm{Fal}}}

\newcommand{\mucan}{\mu_{\mathrm{can}}}
\newcommand{\selfinter}{\ov{\omega}^2_{X_0(p^2)}}
\newcommand{\genus}{g_{X_0(p^2)}}
\newcommand{\jac}{\mathrm{Jac}(X_0(p^2))}
\newcommand{\ar}[2]{\left\langle #1, #2 \right\rangle_{\mathrm{Ar}}}

\newcommand{\liu}{MR1917232}
\newcommand{\fal}{MR740897}
\newcommand{\edi}{MR1056773}
\newcommand{\mor}{MR1022303}
\newcommand{\jorg}{MR2521110}
\newcommand{\zhang}{MR1207481}
\newcommand{\ara}{MR0466150}
\newcommand{\lang}{MR969124}
\newcommand{\au}{MR1437298}
\newcommand{\may}{MR3232770}

\DeclareMathOperator{\pic}{Pic}
\DeclareMathOperator{\Div}{div}
\DeclareMathOperator{\spec}{Spec}


\newcommand\N{\mathbb{N}}

\newcommand{\Ga}{\Gamma}

\newcommand{\de}{\delta}

\newcommand{\ov}{\overline}


\newtheorem{thm}{Theorem}
\numberwithin{thm}{section}
\newtheorem{theorem}[thm]{Theorem}
\newtheorem{corollary}[thm]{Corollary}

\newtheorem{proposition}[thm]{Proposition}
\newtheorem{lemma}[thm]{Lemma}
\newtheorem{definition}[thm]{Definition}
\theoremstyle{remark}

\theoremstyle{definition}

\theoremstyle{remark}

\theoremstyle{remark}

\makeatletter
\def\imod#1{\allowbreak\mkern10mu({\operator@font mod}\,\,#1)}
\makeatother


\title{Semi-stable models of modular Curves $X_0(p^2)$ and some arithmetic applications}

\author{Debargha Banerjee}
\author{Chitrabhanu Chaudhuri}
\thanks{The first named author was partially 
supported by the SERB grants YSS/2015/001491 and MTR/2017/000357. The second named author was partially supported by 
the DST-INSPIRE grant IFA-13 MA-21}

\address{INDIAN INSTITUTE OF SCIENCE EDUCATION AND RESEARCH, PUNE, INDIA}


\begin{document}
\begin{abstract}
  In this paper, we compute the semi-stable models of modular curves $X_0(p^2)$ 
  for odd primes $p > 3$ and  compute the Arakelov self-intersection numbers of the 
  relative dualising sheaves for these models. We give two arithmetic applications of our 
  computations. In particular, we give an effective version of the Bogomolov  
  conjecture following the strategy outlined by Zhang and find the stable 
  Faltings heights of the arithmetic surfaces corresponding to these modular curves. 
\end{abstract}

\subjclass[2010]{Primary: 14G40, Secondary: 11F72, 37P30, 11F37, 11F03, 11G50}

\maketitle
\setcounter{tocdepth}{1}
\tableofcontents{}

\section{Introduction}
\label{Intro}
Let $K$ be a number field and $X$ a smooth projective curve over $K$ with genus $g_X >0$. 
Let $\cO_K$ be the ring of integers of $K$ and $\cX/\cO_K$ the minimal regular model for $X/K$. 
The model $\cX/\cO_K$ is a regular scheme of dimension 2 with a map $\cX \to \spec \cO_K$ hence an arithmetic
surface in the sense of Liu \cite{\liu}. There is an intersection theory for invertible 
sheaves on arithmetic surfaces due to Arakelov (see Section~\ref{sec:self-inter} for a brief summary). 
We denote this intersection pairing by $\ar{\ \cdot \ }{\ \cdot \ }$. Let $\omega_{\cX/\cO_K}$ be the 
relative dualising sheaf of $\cX/\cO_K$ over $\spec \cO_K$. The quantity
\begin{equation*}
  \ov{\omega}^2_{\cX} = 
  \frac{\ar{\ov\omega_{\cX/\cO_K}}{\ov\omega_{\cX/\cO_K}}}{[K:\qq]}
\end{equation*}
is an invariant for $X$ independent of the field $K$ if $\cX$ is semi-stable. This means
for any field extension $K'/K$ if we take $X' = X \times_{\spec K} \spec K'$ and $\cX'/\cO_{K'}$ is the
minimal regular model of $X'/K'$ then $\overline{\omega}^2_{\cX'} = \overline{\omega}^2_{\cX}$. 
We call this quantity the stable arithmetic self-intersection number for $X$ and denote it by $\ov{\omega}^2_{X}$. 

For the rest of the paper, we fix $K$ to be the number field
\begin{equation} \label{eq:field}
  K = \qq \Big(\sqrt[r]{p}, \zeta_{p+1} \Big),
\end{equation}
where $r = (p^2 -1)/2$ and $\zeta_{p+1}$ is a primitive $(p+1)$ - th root of unity. 

The main objective of this paper is to calculate the stable arithmetic self-intersection 
$\selfinter$ for the modular curves $X_0(p^2)/K$. The modular curve $X_0(p^2)$ is a compactified moduli 
space of elliptic curves along with some extra structure (see \cite{MR2112196}).

In Section \ref{sec:semistable} following Edixhoven \cite{\edi}, we show that  the minimal regular model 
$\cX_0(p^2)/\cO_K$ is semi-stable. Stable models for these curves over $\qq_p$ were already 
computed by Edixhoven. However, the stable models are not regular. We resolve singularities and blow down 
all possible rational components in the special fibers, without introducing singularities, and thus obtain the 
minimal regular models. The cases of $p \in \{5, 7, 13\}$ are dealt with separately in the appendix.
We expect that the semi-stable models will be useful to compute the index of the Eisenstein ideals, 
and prove the Ogg's conjecture for the modular curves of the form  $X_0(p^2)$ following the 
strategy of Mazur~\cite{MR488287}.

Let $\genus$ be the genus of $X_0(p^2)$. Using the geometry of the semi-stable models and the results of \cite{ddc1}, 
we obtain the following asymptotic formula (Theorem \ref{thm:self-inter}) in \S
~\ref{sec:self-inter}: 
\begin{equation*}
  \selfinter = 2 \genus \log p^2 + o(p^2 \log (p^2)).
\end{equation*}
The basic strategy to prove the above mentioned theorem is same as that of similar theorems 
proved for  modular curves of the form $X_0(N)$ (\cite{MR1437298, MR1614563}), $X_1(N)$ (\cite{MR3232770})
or $X(N)$ (\cite{Grados:Thesis}) when $N$ is square-free.
For Fermat's curves, similar theorems were proved in ~\cite{Curilla:Thesis}. Note that effective bounds 
on self-intersection numbers  are given for general arithmetic surfaces in \cite{Kuhnupperbound}, \cite{MR3581222}.
As a corollary of Theorem \ref{thm:self-inter} we obtain an asymptotic expression for the stable Faltings 
height $h_{\mathrm{Fal}}$ of the Jacobian $\jac/\qq$ of $X_0(p^2)/\qq$. This is also the arithmetic degree of the direct 
image of $\overline{\omega}_{\cX_0(p^2)/\cO_K}$ onto $\spec \cO_K$. In Corollary \ref{cor:falt} we show that
\begin{equation*}
  \hfal{\jac} = \frac{1}{6} \genus\log(p^2) + o(\genus\log(p^2)).
\end{equation*}

Next in Section \ref{sec:bogomolov} we prove an effective Bogomolov conjecture for the particular case of modular 
curves of the form $X_0(p^2)$ with $p$ --- an odd prime --- by using Zhang's proof of the general effective Bogomolov
conjecture~\cite[Theorem 5.6]{MR1207481}. In the above mentioned fundamental paper, the bounds are achieved in 
terms of the admissible self-intersection number. This number, however, depends on the semi-stable model 
of the particular modular curve $X_0(p^2)$. The computation of this number involves the geometry of the 
special fibers of $\cX_0(p^2)/\cO_K$ which is quite complicated. We wish to draw the attention of the reader 
to the seminal work of L. Szpiro \cite[Theorem 3]{MR1106917}, \cite{MR801928} for an introduction 
to this subject. 

Let $D= \infty$ be the divisor of degree $1$ corresponding to the cusp $\infty \in X_0(p^2)(\qq)$.
This divisor gives an embedding of the curve into its Jacobian $\varphi_D: X_0(p^2) \to \jac$. 
Let $\hNT$ be the N\'{e}ron-Tate height on $\jac$.

\begin{theorem} \label{thm:effectivebogomolov}
For a sufficiently large prime $p$, 
the set
\[
  \left\{x \in X_0(p^2)(\ov\qq) \mid \hNT (\varphi_D(x)) <\left(\frac{1}{2}-\epsilon\right) \log(p^2)  
  \right\}
\]
is finite  but the set 
\[
  \left\{x \in X_0(p^2)(\ov\qq) \mid \hNT (\varphi_D(x)) \leq \left(1+\epsilon\right) \log(p^2) 
  \right\}
\]
is infinite. 
\end{theorem}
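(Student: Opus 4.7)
\emph{Proof plan.}
The strategy is to invoke Zhang's effective Bogomolov conjecture \cite[Theorem 5.6]{MR1207481}, which pinches the essential minimum
\[
e(D):=\sup_{Y\subsetneq X_0(p^2)}\;\inf_{x\in (X_0(p^2)\setminus Y)(\ov\qq)}\hNT(\varphi_D(x))
\]
between two explicit quantities built from the \emph{admissible} self-intersection $\omega_a^2$ of the relative dualising sheaf of $\cX_0(p^2)/\cO_K$ and the Néron--Tate height of the Hodge-type class $\xi_D:=(2\genus-2)D-K_{X_0(p^2)}$; to leading order both bounds are multiples of $\omega_a^2/\genus$. Because $X_0(p^2)$ is one-dimensional, $e(D)$ is just the infimum of $\hNT\circ\varphi_D$ away from a finite set, so the two assertions of the theorem are exactly equivalent to the two-sided estimate
\[
\Big(\tfrac12-\epsilon\Big)\log p^2 \;\leq\; e(D) \;\leq\; (1+\epsilon)\log p^2
\]
for all sufficiently large $p$ and any fixed $\epsilon>0$.

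\emph{Step 1: pass from $\selfinter$ to $\omega_a^2$.} By Zhang's orthogonal decomposition of the admissible dualising sheaf one has
\[
\selfinter \;=\; \omega_a^2 \;+\; \sum_{v\mid p}\bigl(\text{graph-theoretic correction at }v\bigr),
\]
where each correction is a non-negative quantity expressed through Green's functions (equivalently, effective resistances) on the reduction graph of the special fiber $\cX_0(p^2)_v$. Feeding into these formulas the explicit description of the dual graphs produced in Section~\ref{sec:semistable}, I would bound the corrections by $o(\genus\log p^2)$; combined with Theorem~\ref{thm:self-inter} this yields
\[
\omega_a^2 \;=\; 2\genus\log p^2 \;+\; o(\genus\log p^2).
\]

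\emph{Step 2: the class $\xi_D$ contributes only lower-order terms.} In Zhang's inequalities the quantity $\hNT(\xi_D)$ appears divided by $(2\genus-2)^2\asymp p^4$, while the main term $\omega_a^2/(2\genus-2)\asymp \log p^2$ is much larger, so only a crude polynomial-in-$p$ bound on $\hNT(\xi_D)$ is needed. Such a bound is routine: write $K_{X_0(p^2)}$ as a combination of cuspidal divisors and boundary components, use Manin--Drinfeld to kill the cuspidal part modulo torsion in the Jacobian, and control the remainder by the Faltings height estimate of Corollary~\ref{cor:falt}.

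\emph{Step 3: assemble.} Plugging Step~1 into Zhang's pinching inequalities, the lower end becomes $\omega_a^2/(4(\genus-1))\sim\tfrac12\log p^2$ and the upper end becomes $\omega_a^2/(2(\genus-1))\sim\log p^2$; Step~2 shows that the $\xi_D$-contribution is absorbed into $o(\log p^2)$. The resulting two-sided bound on $e(D)$ translates directly, by the definition of the essential minimum on a curve, into the finiteness and infiniteness statements of the theorem.

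\emph{Main obstacle.} The delicate part is Step~1: the passage from the Arakelov to the admissible self-intersection requires controlling the Green's function on the dual graphs of the special fibers of $\cX_0(p^2)$ above $p$, whose combinatorics is genuinely richer than in the square-free level case handled in \cite{may,MR1437298}. The explicit semi-stable models developed in Section~\ref{sec:semistable} are what make these resistance estimates accessible.
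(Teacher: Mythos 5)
Your proposal follows essentially the same route as the paper's proof: invoke Zhang's Theorem 5.6 for $D=\infty$, convert the Arakelov asymptotic of Theorem \ref{thm:self-inter} into an asymptotic for the admissible self-intersection $\ov\omega_{a,\cX_0(p^2)}^2 = 2\genus\log(p^2)+o(\genus\log p)$ by bounding the graph-theoretic corrections on the dual graphs of the special fibers (this is exactly what Propositions \ref{lengthbound}, \ref{boundpdifcong} and Lemma \ref{localadmissiblepairing1} do, phrased via the tau constant and $\tilde\theta$ rather than resistances), and then note that the Néron--Tate term is lower order, giving the $\tfrac12\log p^2$ and $\log p^2$ thresholds. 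The only real divergence is your Step 2: the paper bounds $h=\hNT\bigl(D-\mathcal{K}_{X_0(p^2)}/(2\genus-2)\bigr)=O(\log p)/\genus^2$ by the Michel--Ullmo argument on heights of elliptic points, and your proposed substitute (Manin--Drinfeld for the cuspidal part plus the Faltings-height estimate of Corollary \ref{cor:falt}) does not by itself control the elliptic-point contribution to this class --- though since only a crude $o(\log p^2)$ bound is needed, this is a repairable imprecision in an auxiliary step rather than a structural gap.
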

The general Bogomolov conjecture has been proved by Ullmo \cite{MR1609514} using ergodic theory; however 
the proof is not effective.

In \cite{MR3207580}, various bounds are provided for the  Arakelov self-intersection numbers, admissible 
self-intersection numbers, and stable Faltings heights for any smooth projective curve. 
In this article, we provide a precise asymptotic expression in the particular case of modular curves of 
the form $X_0(p^2)$. In a recent article~\cite{Parent}, Pierre Parent has given a bound on the height of  points of
$X_0(p^2)(K)$ (for a quadratic field $K$) assuming Brumer's conjecture. It will be intriguing to relate our  
results with that of Parent's.  In a future direction, we hope to generalize our work to $X_0(p^3)$ using 
the semi-stable models constructed by McMurdy-Coleman~\cite{MR2661537}.  We hope to use the work of 
de-Shalit~\cite{MR1279607} to achieve this. 

\subsection{Acknowledgments}
  The authors wish to express sincere gratitude to the the website mathoverflow.net and specially 
  the user j.c. for answering questions regarding circuit reduction of a metrized graph. The authors 
  are greatly indebted to  Professors Jurg Kramer, Robin De Jong, Bas Edixhoven, Ulf K\"{u}hn, and 
  Shou-Wu Zhang for their continuous encouragement and email correspondence. We are grateful to the 
  anonymous referee for a thorough and careful reading of our paper that helped us to improve the 
  mathematical content of the paper.
\section{Semistable models} \label{sec:semistable}
Let  $K$ be the number field  as defined in the introduction. We consider the intermediate field 
$K' = \qq (\sqrt[r]{p})$ with $r = (p^2 -1)/2$. The extension $K'/\qq$ is totally ramified and there
is a prime ideal $\prid$ of $\cO_{K'}$ such that $\prid^{(p^2-1)/2} = p \cO_{K'}$. 
In $\cO_K$, the ideal $\prid $ splits into a product of $s = \varphi(p+1)/2$
distinct primes ideals
\begin{equation} \label{eq:primes}
  \prid \c = \prid_1 \cdots \prid_s.
\end{equation}
Moreover, since $K/K'$ is Galois the primes $\prid_i$'s are all Galois conjugates. Hence it follows that
$\cO_K / \prid_i \cong \ff_{p^2}$. 

Although it is not made explicit, the following theorem is a direct consequence of the results of Section 2 
of Edixhoven \cite{\edi}. 

\begin{theorem}[Edixhoven]
  The minimal regular model $\cX_0(p^2)/\cO_K$ of $X_0(p^2)$ is semi-stable.
\end{theorem}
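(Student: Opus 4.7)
The plan is to combine Edixhoven's explicit description of the stable model of $X_0(p^2)$ over a local base at $p$ with a resolution--contraction argument; away from $p$ the modular curve $X_0(p^2)$ has good reduction, so only the primes $\prid_i$ above $p$ require analysis. First, I would invoke the results of Section 2 of \cite{\edi}: Edixhoven constructs the stable model $\cX_0(p^2)^{\mathrm{st}}$ over the completion of $\cO_{K'}$ at $\prid$, using precisely the ramification $\prid^{(p^2-1)/2} = p\cO_{K'}$ that has been built into our choice of $K'$. By definition the stable model has reduced special fiber with at worst ordinary double points as singularities, so it is semi-stable.

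Because the further extension $K/K'$ is unramified at each $\prid_i$ above $\prid$ with residue extension $\ff_{p^2}/\ff_p$, base changing from $\cO_{K'}$ to $\cO_K$ does not alter the ramification indices at the nodes; the base-changed model is still semi-stable, and now all supersingular points, being $\ff_{p^2}$-rational, are individually defined over each $\cO_K/\prid_i$. Edixhoven's model is, however, not necessarily regular: at a crossing with local equation $xy = \pi^e$ and $e \geq 2$, the total space has a rational double point of type $A_{e-1}$. The next step is to resolve these singularities by the standard toric resolution, which replaces each such point by a chain of $e-1$ copies of $\pp^1$ meeting transversally, both amongst themselves and with the two original components at the endpoints. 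Since every new crossing is again an ordinary double point, the resolution remains semi-stable and is now regular by construction; call this intermediate model $\tcX$.

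To pass from $\tcX$ to the minimal regular model $\cX_0(p^2)$, one contracts all exceptional divisors of the first kind in the special fiber iteratively. In a semi-stable arithmetic surface any $(-1)$-curve meets the rest of the special fiber in at most two points, and its contraction either removes a terminal $\pp^1$ (leaving the model both regular and semi-stable) or merges two components at a new node (again regular and semi-stable). Iterating until Castelnuovo's criterion can no longer be applied produces the unique minimal regular model, which is therefore semi-stable.

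The main obstacle is the explicit bookkeeping, which is taken up in the remaining parts of this section and in the appendix: one must identify precisely which components of $\tcX$ are $(-1)$-curves, verify that contracting them in a legitimate order preserves semi-stability at every stage, and treat the small primes $p \in \{5, 7, 13\}$ separately, since there the exceptional configurations interact with the elliptic or hyperelliptic involutions in a more delicate way.
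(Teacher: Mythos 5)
Your proposal is correct and follows essentially the same route as the paper: both rest on Edixhoven's Section~2 description of the (semi-)stable model attained after the ramified base change built into $K'$, followed by resolving the $xy=\pi^{e}$ singularities by chains of projective lines and contracting $(-1)$-curves, with the (standard) observation that each step preserves regularity and semi-stability and that iterated contraction yields the minimal regular model. The paper simply carries out Edixhoven's procedure explicitly --- blowing up the triple points of the model over $\zz$, base changing to $\cO_K$, and computing the normalization via complete local rings case by case --- rather than quoting the stable model as a black box, because those explicit special fibers are needed for the later intersection computations; this is a presentational difference, not a different argument.
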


This section is devoted to the study of this minimal regular model $\cX_0(p^2)/\cO_K$.
For any non-zero prime ideal $\mathfrak{q}$ of $\cO_K$ with $\mathfrak{q} \neq \prid_i$ the fiber 
$\cX_0(p^2)_{\mathfrak{q}}$ is a smooth curve of genus $\genus$ over the residue field $\cO_K/ \mathfrak{q}$. 
However, the fibers $\cX_0(p^2)_{\prid_i}$ are not smooth. Since $\prid_i$'s are Galois conjugates, 
all these special fibers are isomorphic and we shall determine these special fibers.

We start with the regular model $\tcX_0(p^2)/\zz$ of the modular curve, constructed by Edixhoven \cite{\edi}. 
To this we apply the procedure of section 2 in Edixhoven to obtain a stable model, and finally we resolve  the 
singularities. 

Let us spell out the procedure explicitly. There are $k = \lfloor  \frac{p}{12} \rfloor$ points of triple 
intersection in the special fiber 
\begin{equation*}
  \tcX_0(p^2) \times_{\spec \zz} \spec \ff_p .
\end{equation*}
We blow up at these points to get an arithmetic surface $\cY/\zz$ with normal crossings. 
Each triple intersection point gives us a projective line of multiplicity 
$p+1$ after blow up; we name these new components $L_1, \ldots, L_k$. Below is a picture of the special fiber 
of $\cY$ for different primes $p$. Each irreducible component is a projective line.
\begin{figure}[!htb]
  \centering
  \includegraphics[scale=0.5]{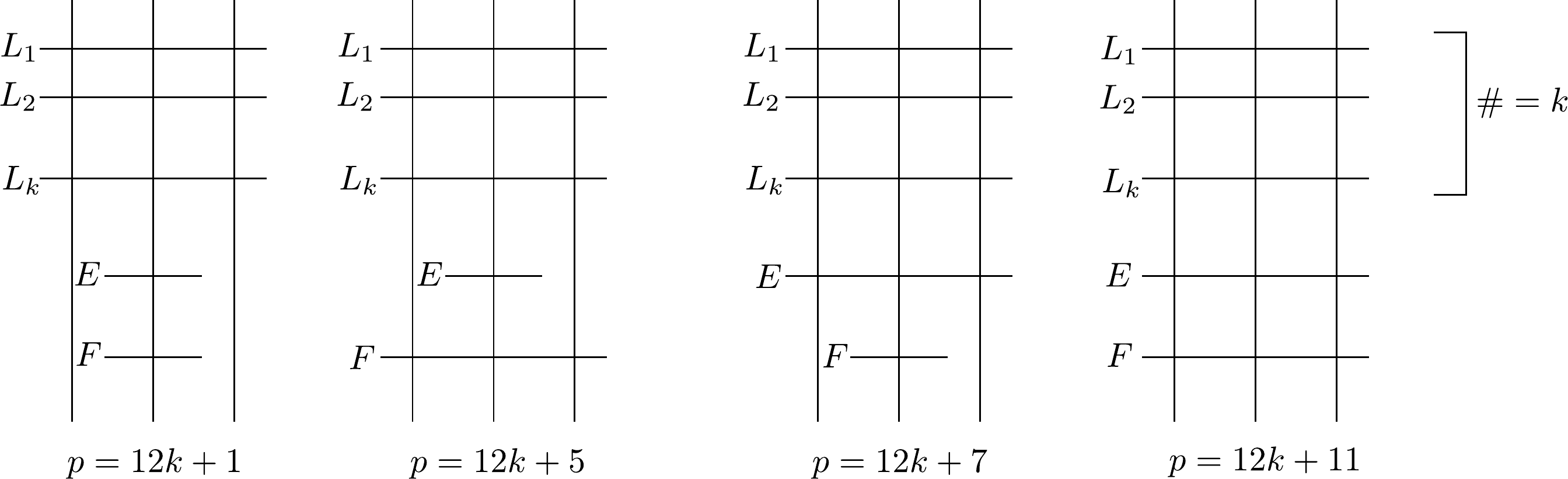}
\end{figure}
Next we base change to $\cO_K$, that is we let 
\begin{equation*}
  \cZ = \cY \times_{\spec \zz} \spec \cO_K.
\end{equation*}
Note that $\cZ_{\prid_i}$ for $i = 1, \ldots, \varphi(p+1)/2$ are the special fibers of $\cZ/\cO_K$, and we have
\begin{equation*}
  \cZ_{\prid_i} 
  \cong \cY_{(p)} \times \spec \ff_{p^2}.
\end{equation*}
Let $\tcZ/\cO_K$ be the normalisation of $\cZ/\cO_K$. Note that $\tcZ$ is not regular, so we calculate the de-singularization 
and call it $\cW/\cO_K$. Finally we blow down the rational components with self-intersection $-1$ in the special fibers 
of $\cW/\cO_K$ to get the minimal regular model $\cX_0(p^2)/\cO_K$. Since all the special fibers are isomorphic we fix 
one prime ideal $\prid_i$ as in \eqref{eq:primes} and look at the fiber over that.

We first do the calculation locally, that is we determine the complete local rings of $\tcZ$. Let 
$z \in \cZ_{\prid_i}$ be a closed point of the special fiber and $\widehat{\cO}_{\cZ,z}$ the complete local 
ring of that point. Then we use the following result from \cite[Chapter IV, 7.8.2 and 7.8.3]{EGA} or 
\cite[Lemma 0CBM]{stacks-project}
\begin{equation*}
  \tcZ \times_{\cZ} \spec \widehat{\cO}_{\cZ, z} = \text{Normalization of } \spec (\widehat{\cO}_{\cZ,z}). 
\end{equation*}

Calculations are dependent on the class of $p$ in $\zz/12\zz$. 

\subsection{Case 1: $p=12k + 1$} \label{subsec:1mod12}
Let $\rhat$ be the completion of $\cO_K$ at $\prid_i$. We can find a uniformizer $\fancyp$
of $\rhat$ such that $\fancyp^{(p^2-1)/2} = p$. Let $z$ be a closed point of the special fiber $\cZ_{\prid_i}$ then
\begin{equation*}
  \widehat{\cO}_{\cZ,z} \cong \frac{\rhat[[x,y]]}{(x^a y^b - \fancyp^{(p^2-1)/2})},
\end{equation*}
where at the double points of the special fiber
\begin{equation*}
  (a,b) \in \{(p+1,1)\} \cup \left(\left\{p+1, \frac{p-1}{2},\frac{p-1}{3} \right\} \times \{p-1\}\right)
\end{equation*}
and at smooth points of the special fiber
\begin{equation*}
  (a,b) \in \left\{1,p+1,p-1,\frac{p-1}{2}, \frac{p-1}{3} \right\} \times \{ 0 \}.
\end{equation*}
We want to calculate the complete local rings of the pre-image of these points under the normalisation 
morphism. All the cases have been discussed in \cite[Section 2.2]{MR1056773} except when 
$(a,b) \in \{\frac{p-1}{2},\frac{p-1}{3}\} \times \{0,p-1\}$. We quote the relevant results from Edixhoven.

\begin{itemize} \itemsep10pt
\item If $(a,b) = (1,0)$ the complete local ring is regular hence normal.

\item If $(a,b) = (p-1,0)$
      \begin{equation*}
        \tcZ \times_{\cZ} \spec \widehat{\cO}_{\cZ,z}
        = \bigsqcup_{\zeta \in \mu_{p-1}(\rhat)} \spec \rhat[[x,y]] /(x - \zeta \fancyp^{(p+1)/2}).
      \end{equation*}

\item If $(a,b) = (p+1,0)$
      \begin{equation*}
        \tcZ \times_{\cZ} \spec \widehat{\cO}_{\cZ,z}
        = \bigsqcup_{\zeta \in \mu_{p+1}(\rhat)} \spec \rhat[[x,y]] /(x - \zeta \fancyp^{(p-1)/2}).
      \end{equation*}

\item If $(a,b) = (p+1,1)$,
      \begin{equation*}
        \tcZ \times_{\cZ} \spec \widehat{\cO}_{\cZ,z}
        = \spec \rhat[[x,y]] /(xu - \fancyp^{(p-1)/2})
      \end{equation*}
      with $y=u^{p+1}$. The normalisation is not regular.

\item If $(a,b) = (p+1,p-1)$,
      \begin{equation*}
        \tcZ \times_{\cZ} \spec \widehat{\cO}_{\cZ,z}
        = \spec \rhat[[u,v]]/(uv-\fancyp) \bigsqcup \spec \rhat[[u,v]]/(uv-\fancyp).
      \end{equation*}

\item When $(a,b) = \left(\dfrac{p-1}{2},0\right)$, after factorising
      \begin{equation*}
        \left( x^{(p-1)/2} - \fancyp^{(p^2-1)/2} \right) = 
        \prod_{\zeta \in \mu_{(p-1)/2}(\rhat)} \left(x - \zeta \fancyp^{p+1} \right)
      \end{equation*}
      we see that 
      \begin{equation*}
        \tcZ \times_{\cZ} \spec \widehat{\cO}_{\cZ,z} \cong 
        \bigsqcup_{\zeta \in \mu_{(p-1)/2}(\rhat)} \spec \rhat[[x,y]]/(x - \fancyp^{p+1}).
      \end{equation*}
      There are $(p-1)/2$ pre-images of this point in the normalisation. 

\item For $(a,b) = \left(\dfrac{p-1}{3},0 \right)$ similarly as above
      \begin{equation*}
        \tcZ \times_{\cZ} \spec \widehat{\cO}_{\cZ,z} \cong 
        \bigsqcup_{\zeta \in \mu_{(p-1)/3}(\rhat)} \spec \rhat[[x,y]]/(x - \fancyp^{3(p+1)/2}).
      \end{equation*}
      There are $(p-1)/3$ pre-images of this point in the normalisation. 

\item Next when $(a,b) = \left(\dfrac{p-1}{2},p-1 \right)$
      \begin{equation*}
        \left(x^{(p-1)/2}y^{p-1} - \fancyp^{(p^2-1)/2}\right) = 
        \prod_{\zeta \in \mu_{(p-1)/2} (\rhat)} (xy^2 - \zeta \fancyp^{p+1}).
      \end{equation*}
      Let $u = \fancyp^{(p+1)/2} / y$ then the normalisation of $\rhat[[x,y]]/(xy^2 - \fancyp^{p+1})$
      is 
      \begin{equation*}
        \rhat[[x,y,u]]/(x - u^2, uy - \fancyp^{(p+1)/2}) \cong \rhat[[u,y]]/(uy - \fancyp^{(p+1)/2}).
      \end{equation*}
      Hence
      \begin{equation*}
        \tcZ \times_{\cZ} \spec \widehat{\cO}_{\cZ,z} \cong 
        \bigsqcup_{\zeta \in \mu_{(p-1)/2}(\rhat)} \spec \rhat[[u,y]]/(uy - \fancyp^{(p+1)/2}).
      \end{equation*}
      Here $x = u^2$ and there are $(p-1)/2$ pre-images of this point in the normalisation.  

\item If $(a,b) = (\frac{p-1}{3},p-1)$ by an analogous computation as the previous case 
      we have 
      \begin{equation*}
        \tcZ \times_{\cZ} \spec \widehat{\cO}_{\cZ,z} \cong 
        \bigsqcup_{\zeta \in \mu_{(p-1)/3}(\rhat)} \spec \rhat[[u,y]]/(uy - \fancyp^{(p+1)/2}),
      \end{equation*}
      where $x=u^3$. There are $(p-1)/3$ pre-images of this point in the normalisation. 
\end{itemize}

From this and the calculations of Edixhoven \cite[Section 2]{MR1056773} it is clear that the special fibers of 
$\tcZ$ are given by Figure~\ref{fig:specialfib1}. The number adjacent to each component is its geometric
genus.
\begin{figure}[!htb]
  \centering
  \includegraphics[scale=0.5]{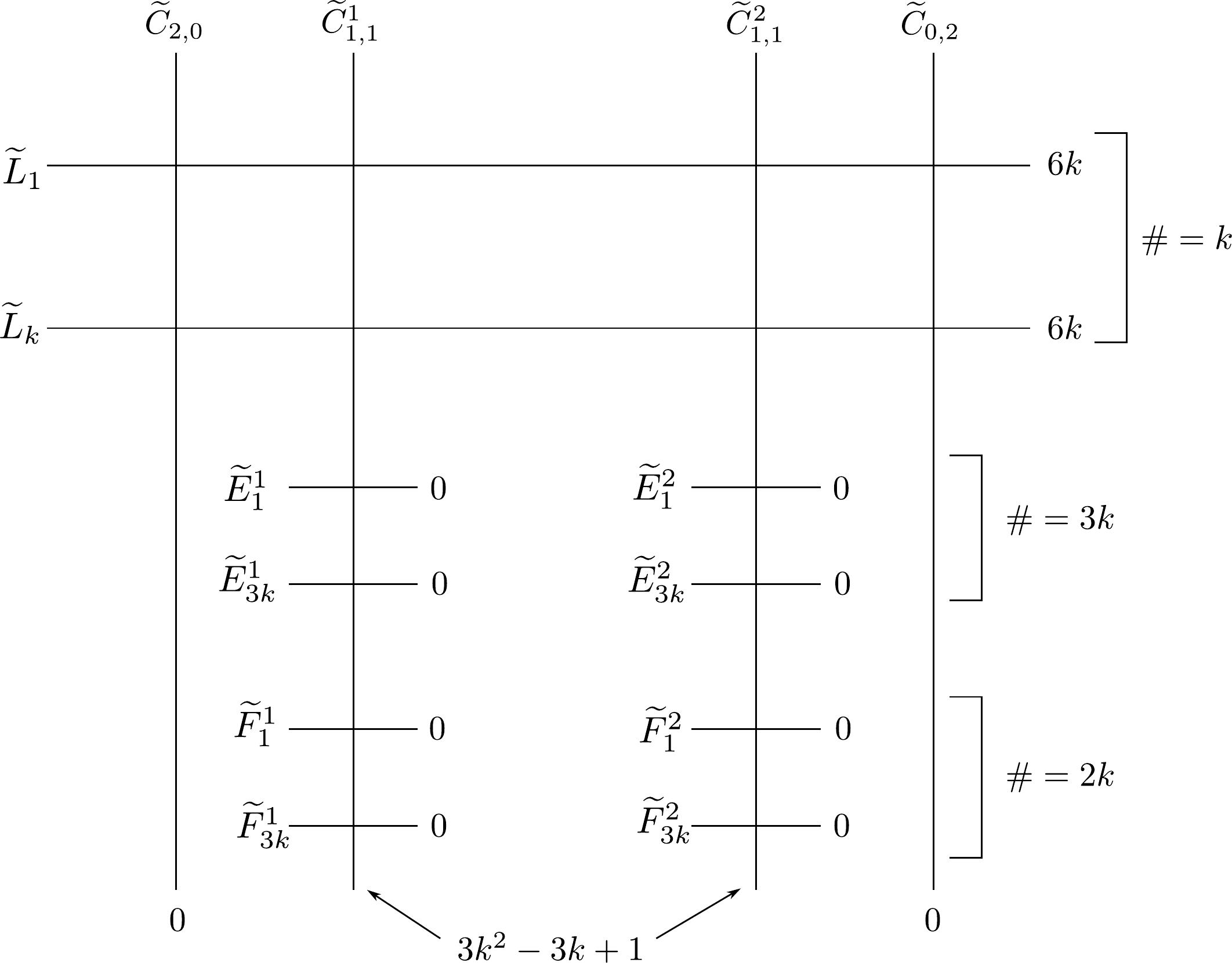}
  \caption{$\tcZ_{\prid_i}$ when $p = 12k+1$.}
  \label{fig:specialfib1}
\end{figure}
\newcommand{\wt}{\widetilde}

The map $\tcZ_{\prid_i} \to \cZ_{\prid_i}$  is described as follows. 
\begin{enumerate}
\item The components $\wt{C}_{2,0}$ and $\wt{C}_{0,2}$ map to $C_{2,0}$ and $C_{0,2}$ isomorphically.
\item The component $\wt{L}_i$ maps to $L_i$ with degree $p+1$. Over the points of intersection of $L_i$ with 
      $C_{2,0}$ and $C_{0,2}$ the map is totally ramified; at the points of intersection
      of $\wt{L}_i$ with $\wt{C}_{1,1}^1$ and $\wt{C}_{1,1}^2$ the ramification index is $(p-1)/2$.
\item The components $\wt{E}^j_i$ map isomorphically to $E$ and $\wt{F}^j_i$ map isomorphically to $F$.
\item On the other hand, $\wt{C}_{1,1}^j$ maps to $C_{1,1}$ with degree $(p-1)/2$. At the points of intersection 
      with $\wt{L}_i$ the map is totally ramified. At the points of intersection with
      $\wt{E}_i^j$, the ramification index is $2$ and at the points of intersection with
      $\wt{F}_i^j$ the ramification index is 3.    
\end{enumerate}

Only possible points where $\tcZ$ can fail to be regular are the singular points of the
special fibers. In the following table we list the local equation of all the double points
of the special fiber over $\prid_i$.
\begin{equation*}
  \renewcommand{\arraystretch}{1.2}
  \begin{array}{c|c|c|c|c}
    & \wt{C}_{2,0} & \wt{C}_{1,1}^1 & \wt{C}_{1,1}^2 & \wt{C}_{0,2} \\ \hline
    \wt{L}_i & ux - \fancyp^{(p-1)/2} & uv - \fancyp & uv - \fancyp & ux - \fancyp^{(p-1)/2}\\ \hline
    \wt{E}_i^1 & & uy - \fancyp^{(p+1)/2} &  & \\ \hline
    \wt{E}_i^2 & & & uy - \fancyp^{(p+1)/2}  & \\ \hline
    \wt{F}_i^1 & & uy - \fancyp^{(p+1)/2} &  & \\ \hline
    \wt{F}_i^2 & & & uy - \fancyp^{(p+1)/2}  & \\ 
  \end{array}
\end{equation*}

Hence the singular points are: the intersections of $\wt{L}_i$ with $\wt{C}_{2,0}$ and $\wt{C}_{0,2}$,
which we call $\alpha_i$ and $\prid_i$ respectively; the intersection points of $\wt{E}_i^j$ and 
$\wt{F}_i^j$ with $\wt{C}_{1,1}^j$ which we call $\sigma_i^j$ and $\tau_i^j$ respectively. At the other points
the local rings are regular. 

We resolve the singularities by successive blow ups as in Liu~\cite{MR1917232}. Let $\cW/\cO_K$ be the de-singularisation. 
The pre-image of $\alpha_i$ in $\cW$ is a tail consisting of projective lines, $A_{1,i}, \ldots, 
A_{(p-1)/2 - 1,i}$, where $A_{1,i}$ meets $\wt{C}_{2,0}$ and $A_{2,i}$; $A_{j,i}$ meets $A_{j-1,i}$
and $A_{j+1,i}$ for $1<j<(p-1)/2-1$; $A_{(p-1)/2 - 1,i}$ meets $A_{(p-1)/2 - 2,i}$ and $\wt{L}_i$.

Similarly, the pre-image of $\prid_i$ is a tail of projective lines $B_{1,i}, \ldots, 
B_{(p-1)/2 - 1,i}$ with analogous intersections.

The pre-image of $\sigma_i^j$ is again a union of $(p+1)/2-1 = (p-1)/2$ projective lines
$S_{1,i}^j, \ldots, S_{(p-1)/2,i}^j$ each meeting its successor, $S_{1,i}^j$ intersects $\wt{E}_i^j$
and $S_{(p-1)/2,i}^j$ intersects $\wt{C}_{1,1}^j$.

Finally the pre-image of $\tau_i^j$ is again a union of $(p+1)/2-1 = (p-1)/2$ projective lines
$T_{1,i}^j, \ldots, T_{(p-1)/2,i}^j$ each meeting its successor, $T_{1,i}^j$ intersects $\wt{F}_i^j$
and $T_{(p-1)/2,i}^j$ intersects $\wt{C}_{1,1}^j$.

The model $\cW/\cO_K$ is regular and semi-stable. However $\wt{E}_i^j$ are projective lines with self-intersection 
$-1$ and can be blown-down, and similarly we can blow down $\wt{F}_i^j$. Successively we can blow-down all 
$S_{l,i}^j$ and $T_{l,i}^j$. 

Let $\cX_0(p^2)/\cO_K$ be the resulting arithmetic surface; then it is the minimal regular model of 
$X_0(p^2)/K$ if $k>1$ (see figure \ref{fig:semistable1}). 
We deal with the case $p=13$ separately in the appendix.
\begin{figure}[!htb]
  \centering
  \includegraphics[scale=0.6]{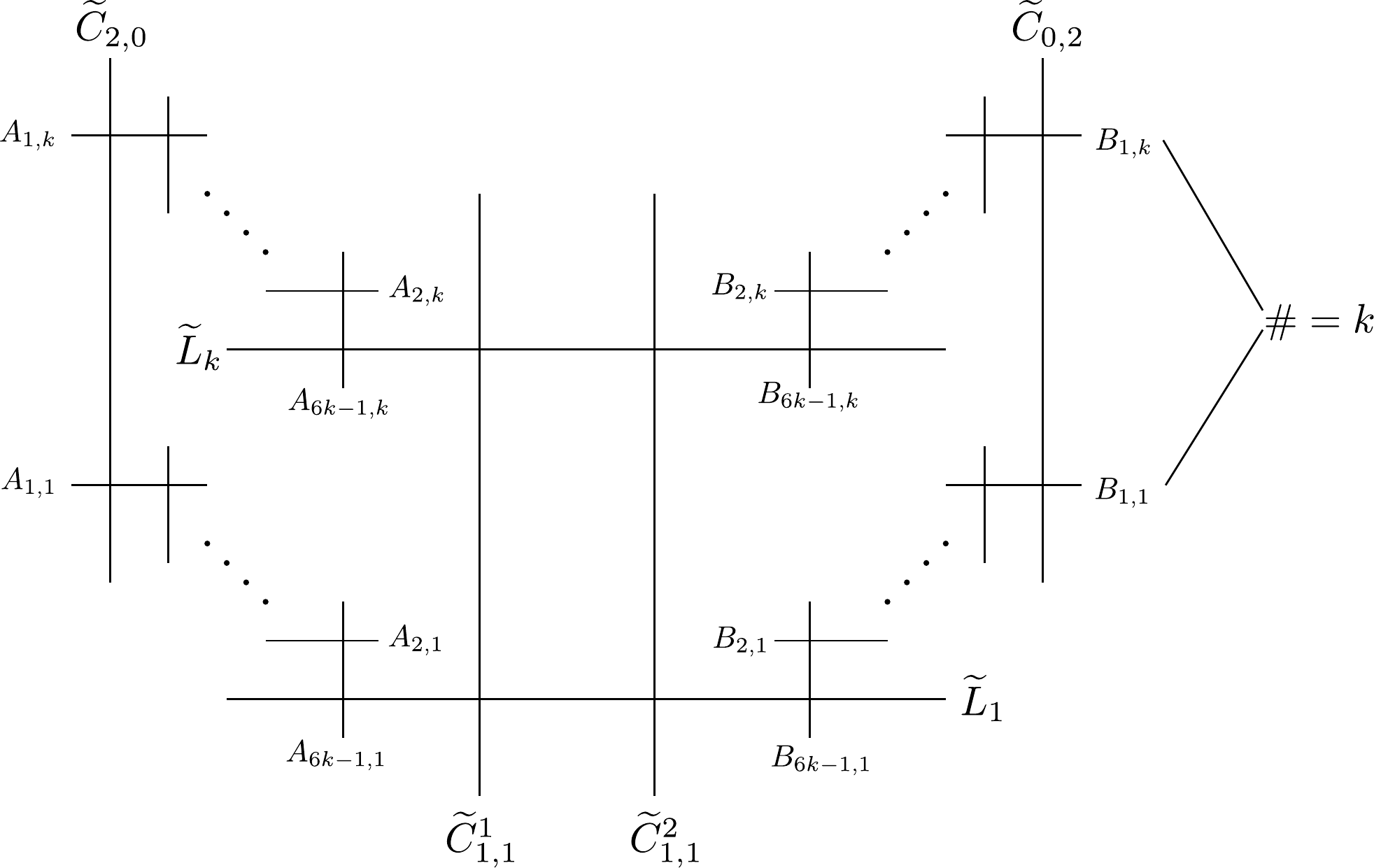}
  \caption{Special fiber $\cX_0(p^2)_{\prid_i}$ when $p=12k+1$, $k>1$.}
  \label{fig:semistable1}
\end{figure}

\subsection{Case 2: $p = 12k+5$} \label{subsec:5mod12}
Here we encounter the complete local rings
\begin{equation*}
  \widehat{\cO}_{\cZ,z} \cong \frac{\rhat[[x,y]]}{(x^a y^b - \fancyp^{(p^2-1)/2})},
\end{equation*}
where at the double points of the special fiber $\cZ_{\prid_i}$
\begin{equation*}
  (a,b) \in \left\{(p+1,1), (p+1,p-1), \left(\frac{p-1}{2}, p-1\right), 
                   \left(\frac{p+1}{3}, p-1\right), \left(\frac{p+1}{3},1\right) \right\}
\end{equation*}
and at smooth points of the special fiber 
\begin{equation*}
  (a,b) \in \left\{1,p+1,p-1,\frac{p-1}{2}, \frac{p+1}{3} \right\} \times \{ 0 \}.
\end{equation*}

We only discuss the cases that have not been dealt with already. The new cases that we encounter 
presently are $(a,b) \in \left\{ \left(\dfrac{p+1}{3},0\right), \left(\dfrac{p+1}{3},1\right), 
\left(\dfrac{p+1}{3},p-1\right) \right\}$.

\begin{itemize} \itemsep10pt
\item When $(a,b) = \left(\dfrac{p+1}{3},0\right)$ we have
      \begin{equation*}
        \left(x^{(p+1)/3} - \fancyp^{(p^2-1)/2} \right) = \prod_{\zeta \in \mu_{(p+1)/3}(\rhat)} 
        \left(x - \zeta\fancyp^{3(p-1)/2} \right).
      \end{equation*}
      Hence
      \begin{equation*} \label{eq:case2b0}
        \tcZ \times_{\cZ} \spec \widehat{\cO}_{\cZ,z} \cong 
        \bigsqcup_{\zeta \in \mu_{(p+1)/3}(\rhat)} \spec \rhat[[x,y]]/(x - \fancyp^{3(p-1)/2}).
      \end{equation*}
      There are $(p+1)/3$ pre-images of this point in the normalisation.

\item When $(a,b) = \left(\dfrac{p+1}{3},1\right)$ we have to calculate the normalisation of 
      $\rhat[[x,y]]/(x^{(p+1)/3}y - \fancyp^{(p^2-1)/2})$. We can in this case take 
      $u = \fancyp^{3(p-1)/2}/x$, then the normalisation is 
      \begin{equation*}
        \rhat[[x,y,u]]/(y-u^{(p+1)/3}, ux - \fancyp^{3(p-1)/2}) \cong 
        \rhat[[x,u]]/(ux - \fancyp^{3(p-1)/2}).
      \end{equation*}
      Thus
      \begin{equation*} \label{eq:case2b1}
        \tcZ \times_{\cZ} \spec \widehat{\cO}_{\cZ,z} \cong 
        \spec \rhat[[x,u]]/(ux - \fancyp^{3(p-1)/2})
      \end{equation*}
      with $y=u^{(p+1)/3}$. There is only one pre-image of this point in the normalisation.

\item Finally when $(a,b) = \left(\dfrac{p+1}{3},p-1\right)$, we can factorise
      \begin{equation*}
        \left(x^{(p+1)/3} y^{p-1} - \fancyp^{(p^2-1)/2}\right) = 
        \left(x^{(p+1)/6} y^{(p-1)/2} - \fancyp^{(p^2-1)/4}\right)
                         \left(x^{(p+1)/6} y^{(p-1)/2} + \fancyp^{(p^2-1)/4}\right).
      \end{equation*}
      Consider
      \begin{equation*}
        \rhat[[x,y]]/(x^{(p+1)/6} y^{(p-1)/2} -\fancyp^{(p^2-1)/4})
      \end{equation*}
      taking $u = \fancyp^{(p+1)/2}/y$ we get an integral extension
      \begin{equation*}
        \rhat[[x,y,u]]/(x^{(p+1)/6} - u^{(p-1)/2},uy - \fancyp^{(p+1)/2}),
      \end{equation*} 
      again taking $v = x/u^2$ we get the integral extension
      \begin{equation*}
        \rhat[[y, u, v]]/(v^{(p+1)/6} - u^{(p-5)/6}, uy - \fancyp^{(p+1)/2}).
      \end{equation*}
      Taking $t = u/v$ we get the integral extension
      \begin{equation*}
        \rhat[[y,v,t]]/(v-t^{(p-5)/6}, yvt - \fancyp^{(p+1)/2}) \cong
        \rhat[[y,t]]/(yt^{(p+1)/6} - \fancyp^{(p+1)/2}).
      \end{equation*}
      Finally we can take $s = \fancyp^3/t$ to get the extension
      \begin{equation*}
        \rhat[[y,s,t]]/(st- \fancyp^3, y - s^{(p+1)/6}) \cong
        \rhat[[s,t]]/(st- \fancyp^3)
      \end{equation*}
      which is normal but not regular. We have
      \begin{equation*} \label{eq:case2bp-1}
        \tcZ \times_{\cZ} \spec \widehat{\cO}_{\cZ,z} \cong 
        \spec \rhat[[s,t]]/(st - \fancyp^3) \ \bigsqcup \ \spec \rhat[[s,t]]/(st - \fancyp^3).
      \end{equation*}
      There are thus 2 pre-images of this point in the normalization.
\end{itemize}      

From these calculations  and those of Edixhoven, section 2 it is clear that the special fiber
$\tcZ_{\prid_i}$ is given by Figure~\ref{fig:specialfib5}.
\begin{figure}[!htb]
  \centering
  \includegraphics[scale=0.5]{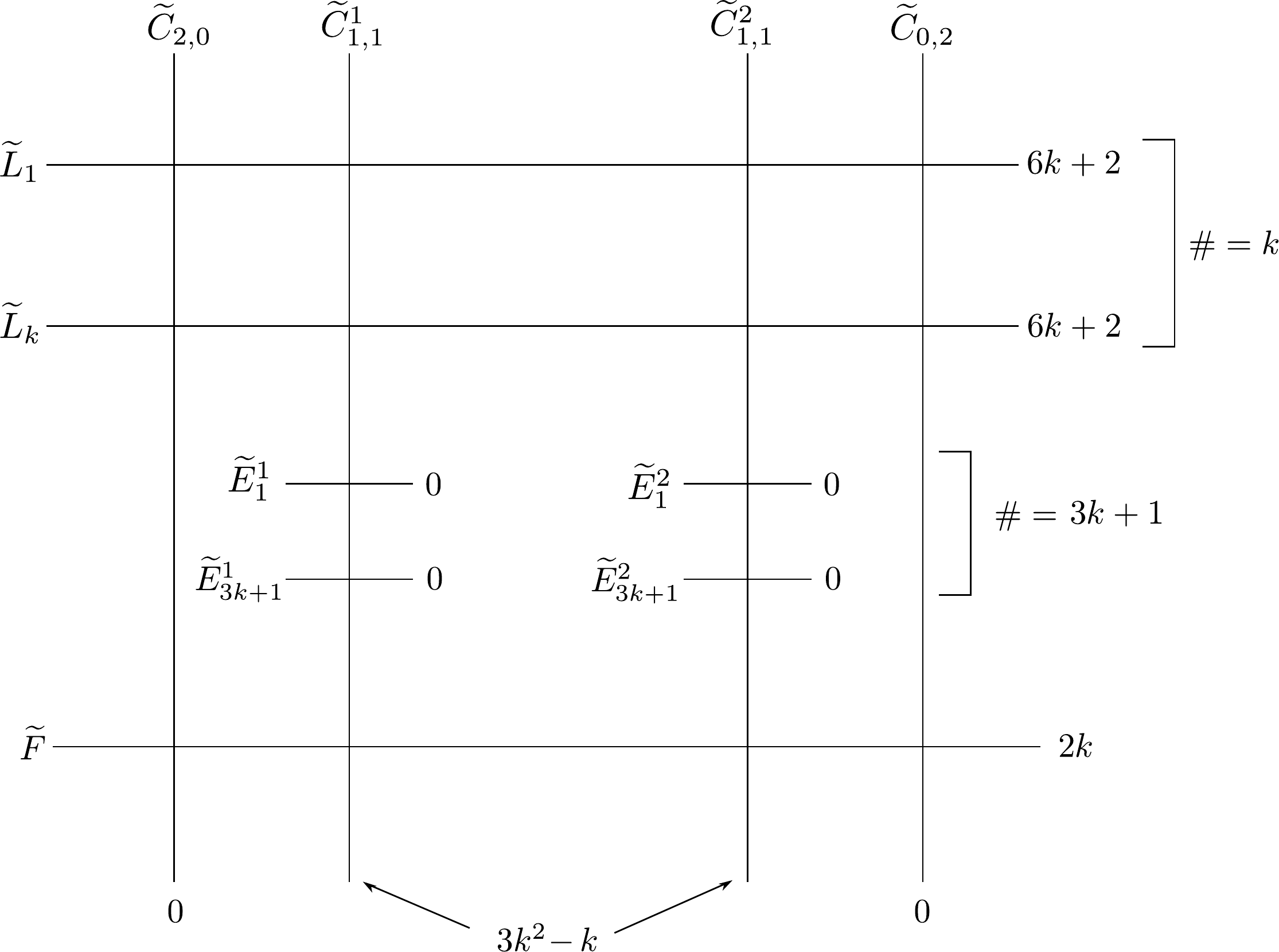}
  \caption{$\tcZ_{\prid_i}$ when $p = 12k+5$.}
  \label{fig:specialfib5}
\end{figure}

The map $\tcZ_{\prid_i} \to \cZ_{\prid_i}$ is described as follows:
\begin{enumerate}
\item The components $\wt{C}_{2,0}$ and $\wt{C}_{0,2}$ map to $C_{2,0}$ and $C_{0,2}$ isomorphically.
\item The component $\wt{L}_i$ maps to $L_i$ with degree $p+1$. Over the points of intersection of $L_i$ with 
      $\wt{C}_{2,0}$ and $\wt{C}_{0,2}$ the map is totally ramified; at the points of intersection
      of $\wt{L}_i$ with $\wt{C}_{1,1}^1$ and $\wt{C}_{1,1}^2$ the ramification index is $(p+1)/2$.
\item The component $\wt{E}_i^j$ maps isomorphically to $E$.
\item The component $\wt{F}$ maps to $F$ with degree $(p+1)/3$, the point on $F$ at which it 
      intersects $C_{2,0}$ has only one pre-image where we have total ramification and 
      same for the point where it intersects $C_{0,2}$. The point where $F$ intersects
      $C_{1,1}$ has 2 pre-images and the ramification index is $(p+1)/6$ at both of these points.
\item The component $\wt{C}_{1,1}^j$ maps to $C_{1,1}$ with degree $(p-1)/2$. At the points of intersection 
      with $\wt{L}_i$ the map is totally ramified. At the points of intersection with
      $\wt{E}_i^j$, the ramification index is $2$ and at the points of intersection with
      $\wt{F}$ the map is again totally ramified.
\end{enumerate}

The local equations at the double points of the special fiber are listed in the following table.
\begin{equation*}
  \renewcommand{\arraystretch}{1.2}
  \begin{array}{c|c|c|c|c}
    & \wt{C}_{2,0} & \wt{C}_{1,1}^1 & \wt{C}_{1,1}^2 & \wt{C}_{0,2} \\ \hline
    \wt{L}_i & ux - \fancyp^{(p-1)/2} & uv - \fancyp & uv - \fancyp & ux - \fancyp^{(p-1)/2}\\ \hline
    \wt{E}_i^1 & & uy - \fancyp^{(p+1)/2} &  & \\ \hline
    \wt{E}_i^2 & & & uy - \fancyp^{(p+1)/2}  & \\ \hline
    \wt{F} & ux - \fancyp^{3(p-1)/2} & st - \fancyp^3 & st - \fancyp^3  & ux - \fancyp^{3(p-1)/2}\\ 
  \end{array}
\end{equation*}

When $k>0$ the special fibers of the minimal regular model $\cX_0(p^2)/\cO_K$ are shown in
Figure~\ref{fig:semistable5}. $X_0(25)$ has genus 0.
\begin{figure}[!htb]
  \centering
  \includegraphics[scale=0.6]{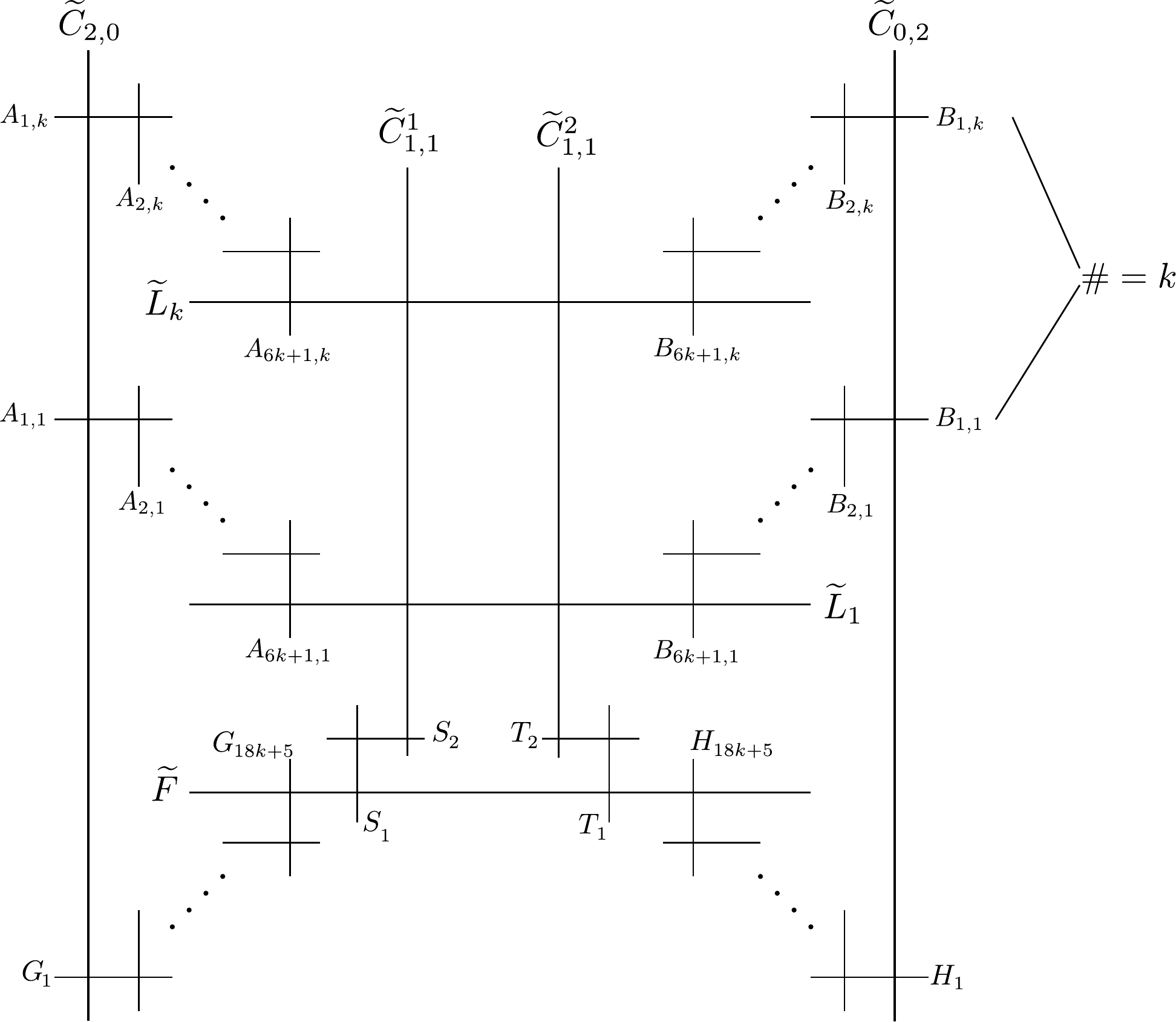}
  \caption{$\cX_0(p^2)_{\prid_i}$ when $p=12k+5$, $k>0$.}
  \label{fig:semistable5}
\end{figure}

\subsection{Case 3: $p=12k+7$} \label{subsec:7mod12}
Now we encounter the complete local rings
\begin{equation*}
  \widehat{\cO}_{\cZ,z} \cong \frac{\rhat[[x,y]]}{(x^a y^b - \fancyp^{(p^2-1)/2})},
\end{equation*}
where at the double points of the special fiber
\begin{equation*}
  (a,b) \in \left\{(p+1,1), (p+1,p-1), \left(\frac{p+1}{2}, p-1\right), \left(\frac{p+1}{2},1\right),
  \left(\frac{p-1}{3}, p-1\right) \right\}
\end{equation*}
and at smooth points of $\cZ_{\prid_i}$
\begin{equation*}
  (a,b) \in \left\{1,p+1,p-1,\frac{p+1}{2}, \frac{p-1}{3} \right\} \times \{ 0 \}.
\end{equation*}
The new cases are $(a,b) \in \left\{\left(\dfrac{p+1}{2},0\right), \left(\dfrac{p+1}{2},1\right), 
\left(\dfrac{p+1}{2},p-1\right) \right\}$.

\begin{itemize} \itemsep10pt
\item When $(a,b) = \left(\dfrac{p+1}{2},0\right)$ we have
      \begin{equation*}
        \left(x^{(p+1)/2} - \fancyp^{(p^2-1)/2} \right) = \prod_{\zeta \in \mu_{(p+1)/2}(\rhat)} 
        \left(x - \zeta\fancyp^{p-1}\right).
      \end{equation*}
      Hence
      \begin{equation*} \label{eq:case3b0}
        \tcZ \times_{\cZ} \spec \widehat{\cO}_{\cZ,z} \cong 
        \bigsqcup_{\zeta \in \mu_{(p+1)/2}(\rhat)} \spec \rhat[[x,y]]/(x - \fancyp^{p-1}).
      \end{equation*}

\item When $(a,b) = \left(\dfrac{p+1}{2},1\right)$ we have to calculate the normalisation of 
      $\rhat[[x,y]]/(x^{(p+1)/2}y - \fancyp^{(p^2-1)/2})$. We can in this case take 
      $u = \fancyp^{p-1}/x$, then the normalisation is 
      \begin{equation*}
        \rhat[[x,y,u]]/(y-u^{(p+1)/2}, ux - \fancyp^{p-1}) \cong 
        \rhat[[x,u]]/(ux - \fancyp^{p-1}).
      \end{equation*}
      Thus
      \begin{equation*} \label{eq:case3b1}
        \tcZ \times_{\cZ} \spec \widehat{\cO}_{\cZ,z} \cong 
        \spec \rhat[[x,u]]/(ux - \fancyp^{p-1})
      \end{equation*}
      with $y=u^{(p+1)/2}$. There is only one pre-image of this point in the normalisation.

\item Finally, when $(a,b) = \left(\dfrac{p+1}{2},p-1\right)$ we can factorise
      \begin{equation*}
        \left(x^{(p+1)/2} y^{p-1} - \fancyp^{(p^2-1)/2}\right) = 
        \left(x^{(p+1)/4} y^{(p-1)/2} - \fancyp^{(p^2-1)/4}\right)
                         \left(x^{(p+1)/4} y^{(p-1)/2} + \fancyp^{(p^2-1)/4}\right).
      \end{equation*}
      Consider
      \begin{equation*}
        \rhat[[x,y]]/(x^{(p+1)/4} y^{(p-1)/2} -\fancyp^{(p^2-1)/4})
      \end{equation*}
      taking $u = \fancyp^{(p+1)/2}/y$ we get an integral extension
      \begin{equation*}
        \rhat[[x,y,u]]/(x^{(p+1)/4} - u^{(p-1)/2},uy - \fancyp^{(p+1)/2}),
      \end{equation*} 
      again taking $v = x/u$ we get the integral extension
      \begin{equation*}
        \rhat[[y, u, v]]/(v^{(p+1)/4} - u^{(p-3)/4}, uy - \fancyp^{(p+1)/2}).
      \end{equation*}
      Taking $t = u/v$ we get the integral extension
      \begin{equation*}
        \rhat[[y,v,t]]/(v-t^{(p-3)/4}, yvt - \fancyp^{(p+1)/2}) \cong
        \rhat[[y,t]]/(yt^{(p+1)/4} - \fancyp^{(p+1)/2}).
      \end{equation*}
      Finally we can take $s = \fancyp^2/t$ to get the extension
      \begin{equation*}
        \rhat[[y,s,t]]/(st- \fancyp^2, y - s^{(p+1)/4}) \cong
        \rhat[[s,t]]/(st- \fancyp^2)
      \end{equation*}
      which is normal but not regular. We thus have
      \begin{equation*} \label{eq:case3bp-1}
        \tcZ \times_{\cZ} \spec \widehat{\cO}_{\cZ,z} \cong 
        \spec \rhat[[s,t]]/(st - \fancyp^2) \ \bigsqcup \ \spec \rhat[[s,t]]/(st - \fancyp^2).
      \end{equation*}
      There are thus 2 pre-images of this point in the normalisation.
\end{itemize}      
      
These together give the special fibers of $\tcZ/\cO_K$ which we describe below.
\begin{center}
\begin{figure}[!htb]
  \includegraphics[scale=0.5]{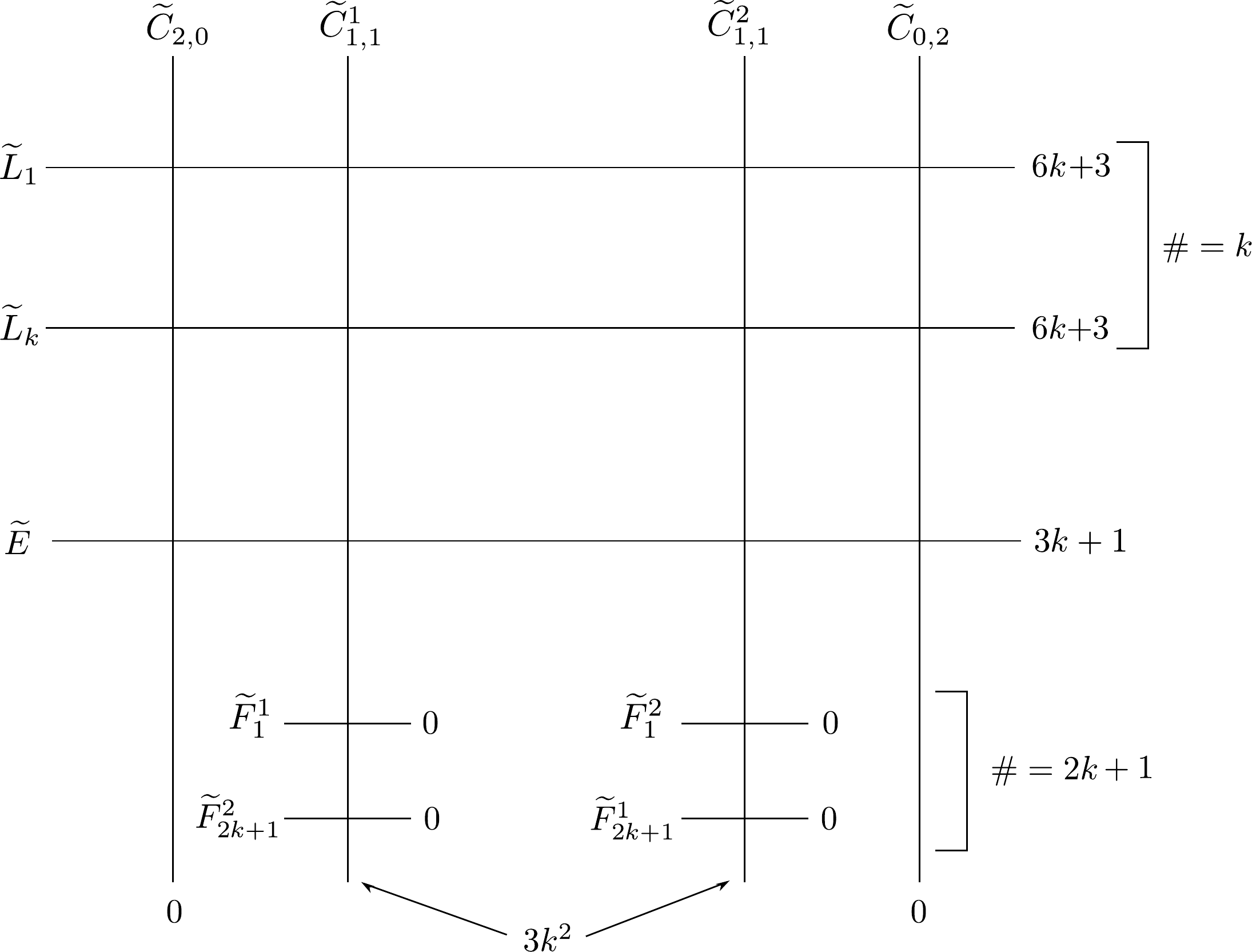}
  \caption{$\tcZ_{\prid_i}$ when $p = 12k+7$.}
\end{figure}
\end{center}

The map of the special fibers $\tcZ_{\prid_i} \to 
\cZ_{\prid_i}$ is described as follows.
\begin{enumerate}
\item The components $\wt{C}_{2,0}$ and $\wt{C}_{0,2}$ map to $C_{2,0}$ and $C_{0,2}$ isomorphically.
\item The component $\wt{L}_i$ maps to $L_i$ with degree $p+1$. At the points of intersection with 
      $\wt{C}_{2,0}$ and $\wt{C}_{0,2}$ the map is totally ramified; at the points of intersection
      with $\wt{C}_{1,1}^1$ and $\wt{C}_{1,1}^2$ the ramification index is $(p+1)/2$.
\item The component $\wt{F}_i^j$ maps isomorphically to $F$.
\item The component $\wt{E}$ maps to $E$ with degree $(p+1)/2$, the point on $E$ at which it 
      intersects $C_{2,0}$ has only one pre-image where we have total ramification and 
      same for the point where it intersects $C_{0,2}$. The point where $E$ intersects
      $C_{1,1}$ has 2 pre-images and the ramification index is $(p+1)/4$ at both of these points.
\item The components $\wt{C}_{1,1}^j$ map to $C_{1,1}$ with degree $(p-1)/2$. At the points of intersection 
      with $\wt{L}_i$ the map is totally ramified. At the points of intersection with
      $\wt{F}_i^j$, the ramification index is $3$ and at the points of intersection with
      $\wt{E}$ the map is again totally ramified.
\end{enumerate}

Below we give the local equations at the nodes of the special fiber.
\begin{equation*}
  \renewcommand{\arraystretch}{1.2}
  \begin{array}{c|c|c|c|c}
    & \wt{C}_{2,0} & \wt{C}_{1,1}^1 & \wt{C}_{1,1}^2 & \wt{C}_{0,2} \\ \hline
    \wt{L}_i & ux - \fancyp^{(p-1)/2} & uv - \fancyp & uv - \fancyp & ux - \fancyp^{(p-1)/2}\\ \hline
    \wt{E} & ux - \fancyp^{p-1} & st-\fancyp^2 & st-\fancyp^2 & ux - \fancyp^{p-1} \\ \hline
    \wt{F}_i^1 & & uy - \fancyp^{(p+1)/2} &  & \\ \hline
    \wt{F}_i^2 & & & uy - \fancyp^{(p+1)/2}  & \\ 
  \end{array}
\end{equation*}

When $k>0$ the special fiber of the minimal regular model $\cX_0(p^2)/\cO_K$ is depicted by 
Figure~\ref{fig:semistable7}. We deal with the case $p=7$ separately in the appendix.
\begin{figure}[!htb]
  \centering
  \includegraphics[scale=0.6]{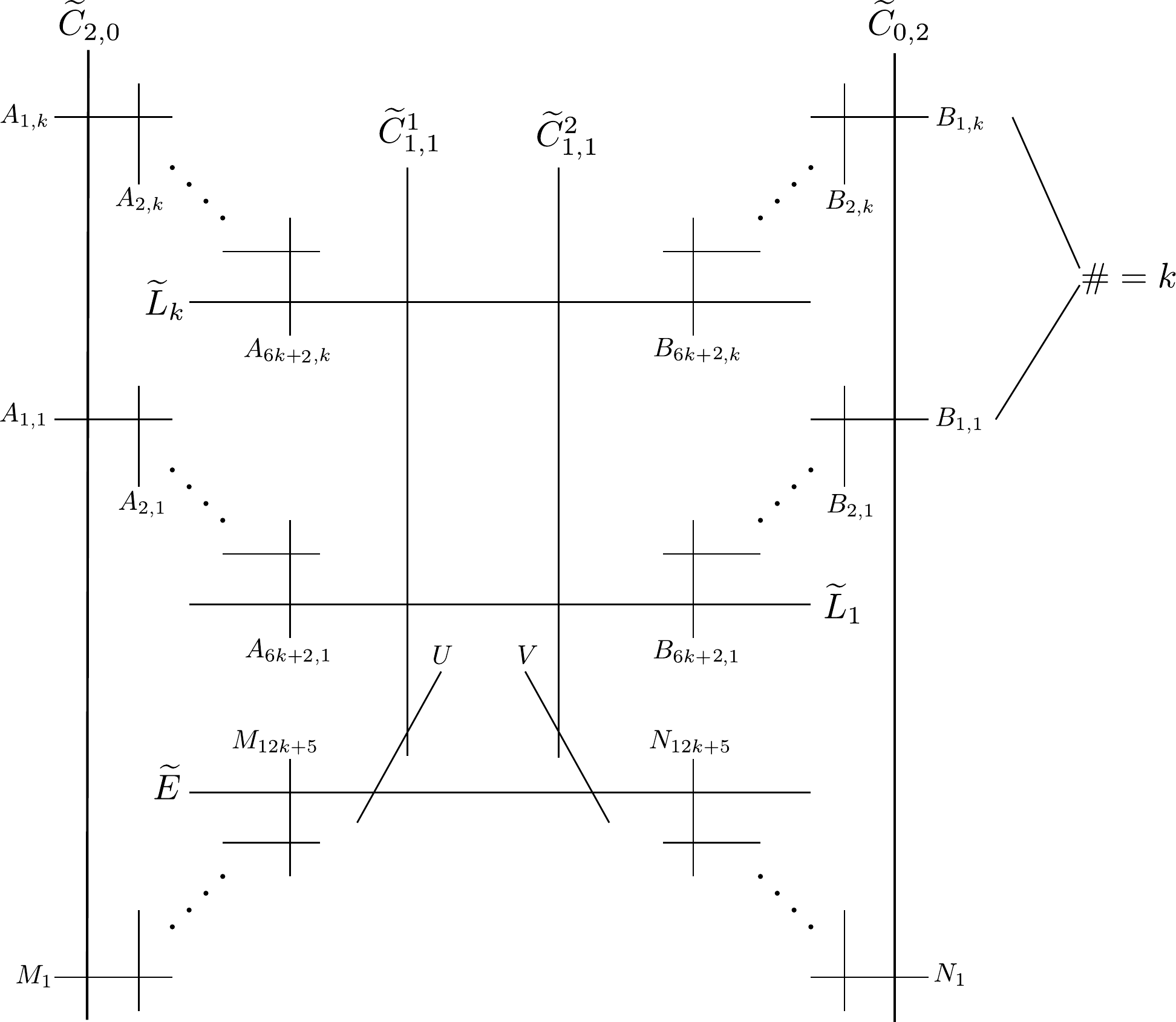}
  \caption{Special fibers of $\cX_0(p^2)/\cO_K$ when $p=12k+7$, $k>0$.}
  \label{fig:semistable7}
\end{figure}

\subsection{Case 4: $p=12k+11$} \label{subsec:11mod12}
We encounter the complete local rings
\begin{equation*}
  \widehat{\cO}_{\cZ,z} \cong \frac{\rhat[[x,y]]}{(x^a y^b - \fancyp^{(p^2-1)/2})},
\end{equation*}
where at the double points of the special fiber
\begin{equation*}
  (a,b) \in \left\{p+1, \frac{p+1}{2}, \frac{p+1}{3} \right\} \times \{1, p-1\}
\end{equation*}
and at smooth points of the special fiber
\begin{equation*}
  (a,b) \in \left\{1,p+1,p-1,\frac{p+1}{2}, \frac{p+1}{3} \right\} \times \{ 0 \}.
\end{equation*}

Since all these cases have already been dealt with we just draw the special fiber below.
\begin{figure}[!htb]
  \centering
  \includegraphics[scale=0.5]{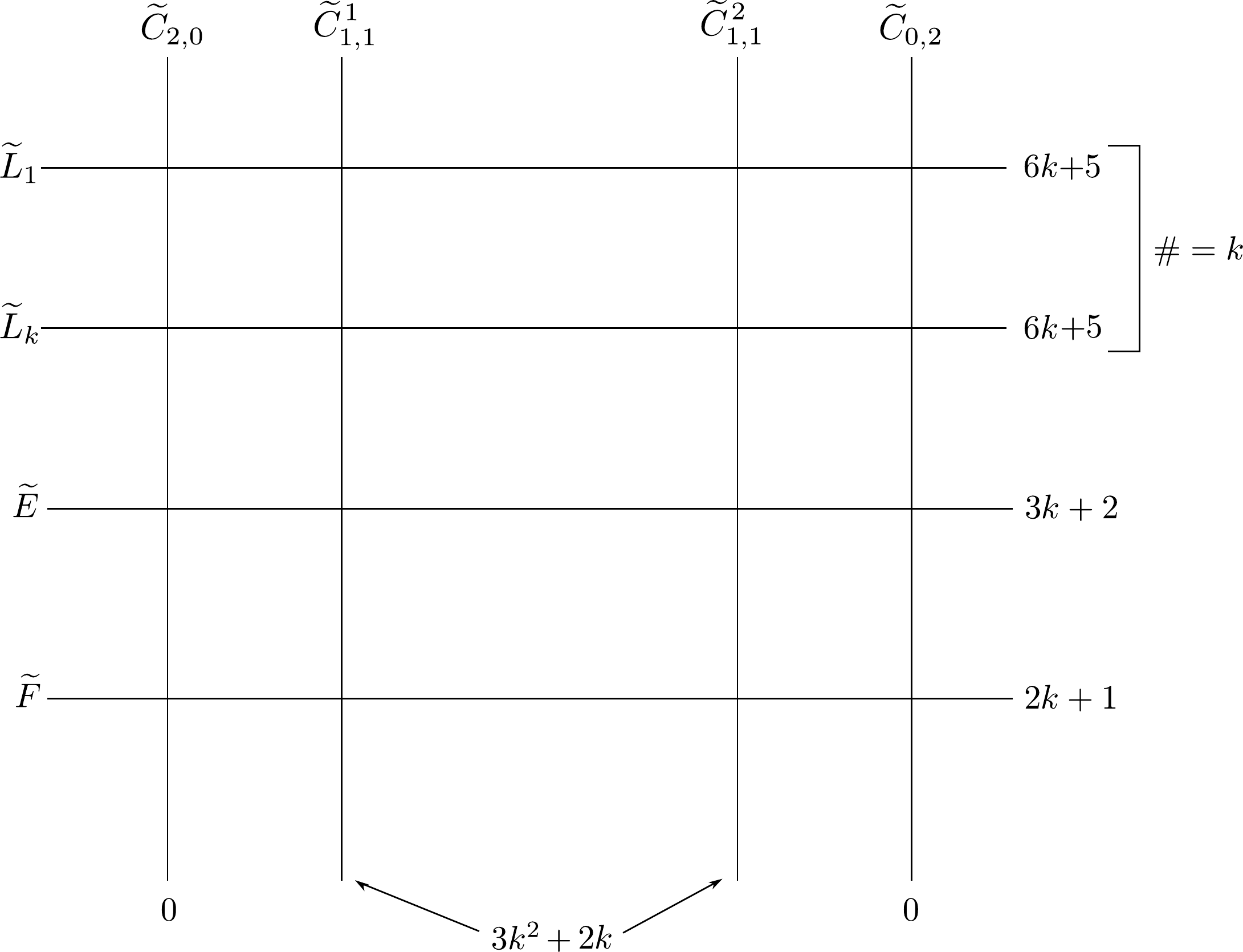}
  \caption{$\tcZ_{\prid_i}$ when $p = 12k+11$.}
\end{figure}

The map of the special fibers $\tcZ_{\prid_i} \to \cZ_{\prid_i}$ is described as follows.
\begin{enumerate}
\item The components $\wt{C}_{2,0}$ and $\wt{C}_{0,2}$ map to $C_{2,0}$ and $C_{0,2}$ isomorphically.
\item The component $\wt{L}_i$ maps to $L_i$ with degree $p+1$. At the points of intersection with 
      $\wt{C}_{2,0}$ and $\wt{C}_{0,2}$ the map is totally ramified; at the points of intersection
      with $\wt{C}_{1,1}^1$ and $\wt{C}_{1,1}^2$ the ramification index is $(p+1)/2$.
\item The component $\wt{E}$ maps to $E$ with degree $(p+1)/2$. The point on $E$ at which it 
      intersects $C_{2,0}$ has only one pre-image where we have total ramification and 
      same for the point where it intersects $C_{0,2}$. The point where $E$ intersects
      $C_{1,1}$ has 2 pre-images and the ramification index is $(p+1)/4$ at both of these points.
\item The component $\wt{F}$ maps to $F$ with degree $(p+1)/3$, the point on $F$ at which it 
      intersects $C_{2,0}$ has only one pre-image where we have total ramification and 
      same for the point where it intersects $C_{0,2}$. The point where $F$ intersects
      $C_{1,1}$ has 2 pre-images and the ramification index is $(p+1)/6$ at both of these points.      
\item The components $\wt{C}_{1,1}^j$ map to $C_{1,1}$ with degree $(p-1)/2$. At the points of intersection 
      with $\wt{L}_i$, $\wt{E}$ and $\wt{F}$ the map is totally ramified. 
\end{enumerate}

The local equations at the nodes of the special fiber are the following.
\begin{equation*}
  \renewcommand{\arraystretch}{1.2}
  \begin{array}{c|c|c|c|c}
    & \wt{C}_{2,0} & \wt{C}_{1,1}^1 & \wt{C}_{1,1}^2 & \wt{C}_{0,2} \\ \hline
    \wt{L}_i & ux - \fancyp^{(p-1)/2} & uv - \fancyp & uv - \fancyp & ux - \fancyp^{(p-1)/2}\\ \hline
    \wt{E} & ux - \fancyp^{p-1} & st-\fancyp^2 & st-\fancyp^2 & ux - \fancyp^{p-1} \\ \hline
    \wt{F} & ux - \fancyp^{3(p-1)/2} & st - \fancyp^3 & st - \fancyp^3  & ux - \fancyp^{3(p-1)/2}\\ 
  \end{array}
\end{equation*}

The special fibers of the minimal regular model $\cX_0(p^2)/\cO_K$ are shown in 
Figure~\ref{fig:semistable11}.
\begin{figure}[!htb]
  \centering
  \includegraphics[scale=0.6]{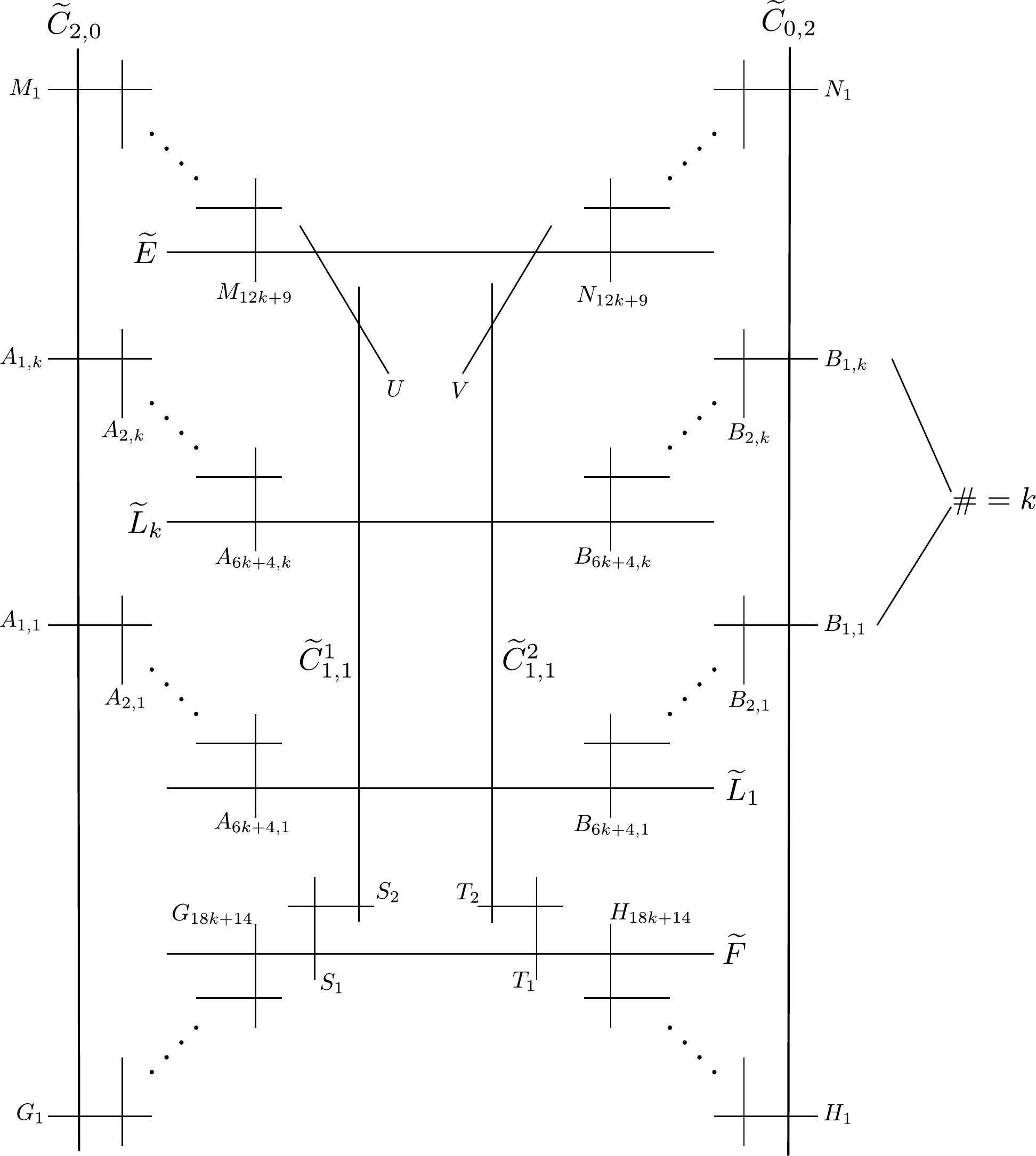}
  \caption{Special fibers of $\cX_0(p^2)/\cO_K$ when $p=12k+11$.}
  \label{fig:semistable11}
\end{figure}

\pagebreak

\section{Arakelov self-intersection of the canonical sheaf} \label{sec:self-inter}
\newcommand{\pichat}{\widehat{\pic}}
\newcommand{\cL}{\mathscr{L}}
\newcommand{\cM}{\mathscr{M}}
\newcommand{\arsq}[1]{\left( #1 \right)^{2,\mathrm{Ar}}}
\newcommand{\fK}{\mathcal{K}}
\newcommand{\can}{\fK_{\cX_0(p^2)}}
\newcommand{\ovcan}{\ov{\fK}_{\cX_0(p^2)}}

In this section we derive an asymptotic expression for the stable arithmetic self-intersection number 
$\selfinter$ of $X_0(p^2)$. For this we carry out a program similar to Abbes-Ullmo \cite{\au}
or Mayer \cite{\may} on the semistable models obtained in the previous section. As a corollary we obtain
an asymptotic expression for the stable Faltings height of $X_0(p^2)$. We start the section with a quick 
introduction to Arakelov intersection pairing. For a thorough account of Arakelov theory one should refer 
to \cite{\ara}, \cite{\fal} and \cite{\lang}. 

\subsection{Arakelov intersection pairing}
\label{Arakelovintersection}
Let $F$ be a number field and $R$ be its ring of integers. Let $\cX$ be an arithmetic surface over $\spec R$
(in the sense of Liu \cite[Chapter 8, Definition 3.14]{\liu}) with the map $f: \cX \to \spec R$. 
Let $X = \cX_{(0)}$ be the generic fiber which is a smooth irreducible projective curve over $F$.
In our case, this generic fiber is the compact Riemann surface $X_0(p^2)$. 
 For each 
embedding $\sigma: F \to \cc$ we get a smooth proper curve   $\cX_{\sigma}$  over $\cc$ by base change
\begin{equation*}
 \cX_{\sigma}:= X \times_{\spec F, \sigma} \spec \cc.
\end{equation*}
This is a smooth curve over $\cc$ via the second projection. The analytification of $ \cX_{\sigma}$ gives 
a compact Riemann surface of genus $g_{\sigma}$. In fact all these Riemann surfaces are isomorphic, so 
let $g_X = g_{\sigma}$, and assume $g_X > 1$. Arakelov introduced an intersection pairing for divisors and 
line bundles on arithmetic surfaces which is well suited for arithmetic geometry.

Any line bundle $\cL$ on $\cX$ induces a line bundle $\cL_{\sigma}$ on the Riemann surface $\cX_{\sigma}$.
A metrized line bundle $\ov{\cL} = (\cL, h)$ is a line bundle $\cL$ on $\cX$ along 
with a hermitian metric $h_{\sigma}$ on each $\cL_{\sigma}$. The isometry classes of metrized line bundles on 
$\cX$ form a group under tensor product: this is called the arithmetic Picard group and denoted by $\pichat(\cX)$. 
There is a symmetric bilinear pairing 
\begin{equation*}
  \pichat(\cX) \times \pichat(\cX) \to \rr
\end{equation*}
called the arithmetic intersection pairing, see for instance \cite{\fal} or \cite{\lang}.

Following Arakelov \cite{\ara} we can associate a unique hermitian metric on $\cL_{\sigma}$ for each $\sigma$ called 
the Arakelov metric. This turns $\cL$ into a metrized line bundle in a unique way. The Arakelov intersection of
two line bundles is the arithmetic intersection of those line bundles equipped with the Arakelov metric. 
Arakelov intersection pairing is thus a special case of arithmetic intersection pairing. 
We shall not describe the more general arithmetic intersection theory, but restrict ourselves to Arakelov
intersection. 

Let $\cL$ and $\cM$ be two line bundles on $\cX$. Let $\ov{\cL}$ and $\ov{\cM}$ be the corresponding unique 
metrized line bundles. The Arakelov intersection of $\cL$ and $\cM$ is given by
\begin{equation*}
  \ar{\ov\cL}{\ov\cM} = \langle \cL, \cM \rangle_{\fin} + 
                                     \sum_{\sigma: F \to \cc} \langle \cL_{\sigma}, \cM_{\sigma} \rangle.
\end{equation*}
The first summand is the algebraic or finite part whereas the second summand is the analytic or infinite 
part of the intersection. 

To describe the finite part, let us assume $l$ and $m$ are non-trivial global 
sections of $\cL$ and $\cM$ respectively such that the associated divisors do not have any common components. 
For each closed point $x \in \cX$, $l_x$ and $m_x$ can be thought of as elements of the local ring $\cO_{\cX, x}$ 
via a suitable trivialization. If $\cX^{(2)}$ is the set of closed points of $\cX$, 
(the number 2 here signifies the fact that a closed point is an algebraic cycle on $\cX$ of codimension 2), then 
\begin{equation*}
  \langle \cL, \cM \rangle_{\fin} := \sum_{x \in \cX^{(2)}} \log \# (\cO_{\cX,x}/ (l_x, m_x)).
\end{equation*}
This is closely related to the usual algebraic intersection product defined in Liu \cite[Chapter 9, 
Definition 1.15]{\liu}.

To describe the infinite part we need to introduce some notation. 
Let $X$ be any compact Riemann surface of genus $g_X \geq 1$. Note that the space $H^0(X, \Omega^1)$ of 
holomorphic differentials on $X$ has a natural inner product on it given by
$\displaystyle \langle \phi, \psi \rangle = \frac{i}{2} \int_{X} \phi \wedge \overline{\psi}.$
Choose an orthonormal basis $f_1, \ldots, f_{g_X}$ of $H^0(X, \Omega^1)$. 
The canonical volume form on $X$ is 
\begin{equation*}
  \mucan = \frac{i}{2g_X} \sum_{j=1}^{g} f_j \wedge \overline{f_j}.
\end{equation*}

\begin{definition}  \label{def:green}
  The canonical Green's function $\gcan$ on the compact Riemann surface $X$ is the 
  unique solution to the differential equation
  \begin{equation*}
    \partial_{z} \partial_{\ov z}\ \gcan (z,w)=i\pi(\mucan (z)-\de_w(z))
  \end{equation*}
  with the normalization condition
  $\displaystyle \int_{X} \gcan(z,w) \mucan(z) = 0$. 
  Here $\de_w(z)$ is the Dirac delta distribution. 
  \end{definition}

Assume that the divisors corresponding to $\cL_{\sigma}$ and $\cM_{\sigma}$ are given by
\begin{equation*}
  \Div(\cL_{\sigma}) = \sum_{\alpha} n_{\alpha, \sigma} P_{\alpha}^{\sigma}, \quad
  \text{and} \quad 
  \Div(\cM_{\sigma}) = \sum_{\prid} r_{\prid, \sigma} Q_{\prid}^{\sigma}
\end{equation*}
then 
\begin{equation*}
  \langle \cL_{\sigma}, \cM_{\sigma} \rangle :=
                              - \sum n_{\alpha,\sigma}r_{\prid,\sigma}\ \gcan
                              (P_{\alpha}^{\sigma}, Q_{\prid}^{\sigma}),
\end{equation*}
where $\gcan$ is the canonical Green's function on $\cX_{\sigma}$.
If $D$ and $E$ are divisors on $\cX$ we shall denote by $\ar{\ov D}{\ov E}$ the Arakelov intersection of 
the corresponding line bundles. To simplify expressions, we introduce the notation 
$\arsq{\ov{D}} := \ar{\ov D}{\ov D}$, for the Arakelov self-intersection of $D$. 

As in the introduction, we shall denote by $\omega_{\cX}$ the relative dualising sheaf of $\cX$ over $\spec R$,
(see Liu \cite[Chapter 6, Definition 4.7]{\liu}, where it is called the canonical sheaf). A canonical divisor
$\fK_{\cX}$ of $\cX$ is a divisor whose associated line bundle is the relative dualising sheaf. The 
adjunction formula for Arakelov theory comes up repeatedly in the following calculations, so let us state it here. 
If $r \in \spec R$ and $V$ is a prime vertical divisor contained in $\cX_r$ then by \cite[Chapter 9, Theorem 1.37]{\liu}
\begin{equation*}
  \ar{\ov{\fK}_{\cX}}{\ov V} = \big(2g(V) - 2\big) \log \#k(r) - \arsq{\ov{V}},
\end{equation*}
where $k(r)$ is the residue field at $r$ and $g(V)$ is the arithmetic genus of $V$. Furthermore if $H$ is a prime 
horizontal divisor, then by 
\cite[Chapter IV, Corollary 5.6]{\lang}
\begin{equation*}
  \ar{\ov{\fK}_{\cX}}{\ov H} = - \arsq{\ov{H}}.
\end{equation*}

\subsection{Stable arithmetic self-intersection number of $X_0(p^2)$}
Recall that the cusps $0, \infty$ of $X_0(p^2)$ are both $\qq$ points of the curve. Let $H_0$ and $H_{\infty}$ be the 
corresponding sections (horizontal divisors) in $\cX_0(p^2)/\cO_K$. Here we assume $p \neq 5, 7, 13$.

From \cite[Section 6]{ddc1} we know that the horizontal divisor $H_0$ intersects exactly one of the 
curves $\wt{C}_{2,0}$ or $\wt{C}_{0,2}$ of the special fiber --- at an $\ff_{p^2}$ rational point --- transversally 
(cf.  Liu~\cite[Chapter~9, Proposition~1.30 and Corollary~1.32]{MR1917232}). We call that component 
$\wt{C}_0$. It follows from the cusp and component labelling of Katz and Mazur~\cite[p.~296]{MR772569} that 
$H_{\infty}$ meets the other component transversally and we call it $\wt{C}_{\infty}$. 

Let us define two vertical divisors on $\cX_0(p^2)/\cO_K$
\begin{equation*}
  V_{m,p} = \sum_{i=1}^{\varphi(p+1)/2} V_{m,p}(\prid_i) \quad \text{for } m = 0, \infty
\end{equation*} 
supported on the special fibers, where $V_{m,p}(\prid_i)$ are as below. When $p=12k+1$ let 
$x = -(k-2)/k = -(p-25)/(p-1)$, define 
\begin{small}
\begin{align*}
  V_{0,p}(\prid_i) = 
            & \left(12 -12\genus\right) \wt{C}_0 + 7 \wt{C}_{1,1}^1 + 7\wt{C}_{1,1}^2 
              + \sum_{i=1}^k \frac{p-1}{2} x \wt{L}_i\\
            & + \sum_{i=1}^k \sum_{l=1}^{6k-1} 
              \left[lx + \frac{p-1-2l}{p-1}\left(12 -12\genus\right)\right] A_{l,i}
              + \sum_{i=1}^k \sum_{l=1}^{6k-1} (lx) B_{l,i},
  \\[10pt]       
  V_{\infty,p}(\prid_i) = 
                 & \left(12 -12\genus\right) \wt{C}_{\infty} + 7 \wt{C}_{1,1}^1 + 7\wt{C}_{1,1}^2 
                   + \sum_{i=1}^k \frac{p-1}{2} x \wt{L}_i\\ 
                 & + \sum_{i=1}^k \sum_{l=1}^{6k-1} 
                   \left[lx + \frac{p-1-2l}{p-1}\left(12 -12\genus\right)\right] B_{l,i} 
                  + \sum_{i=1}^k \sum_{l=1}^{6k-1} (lx) A_{l,i}.                
\end{align*}
\end{small}
If $p=12k+5$ let $x = -3(k-1)/(3k+1) = -(p-17)/(p-1)$, define
\begin{small}
\begin{align*}
  V_{0,p}(\prid_i) = 
            & \left(12 -12\genus\right) \wt{C}_0 + 3 \wt{C}_{1,1}^1 + 3 \wt{C}_{1,1}^2
              + \sum_{i=1}^k \frac{p-1}{2} x \wt{L}_i + \frac{p-1}{2} x \wt{F} \\
            & - (4k-5) S_1 - (4k-5) T_1 - (2k-4) S_2 - (2k-4) T_2 
              + \sum_{i=1}^k \sum_{l=1}^{6k+1} (lx) B_{l,i} + \sum_{j=1}^{18k+5} \frac{jx}{3} H_j\\
            & + \sum_{i=1}^k \sum_{l=1}^{6k+1} 
              \left[lx + \frac{p-1-2l}{p-1}\left(12 -12\genus\right)\right] A_{l,i}
              + \sum_{j=1}^{18k+5} 
              \left[\frac{jx}{3} + \frac{3(p-1)-2j}{3(p-1)}\left(12 -12\genus\right)\right] G_j,
  \\[10pt]
  V_{\infty,p}(\prid_i) = 
                 & \left(12 -12\genus\right) \wt{C}_{\infty} + 3 \wt{C}_{1,1}^1 + 3 \wt{C}_{1,1}^2
                   + \sum_{i=1}^k \frac{p-1}{2} x \wt{L}_i + \frac{p-1}{2} x \wt{F} \\
                 & - (4k-5) S_1 - (4k-5) T_1 - (2k-4) S_2 - (2k-4) T_2 
                   + \sum_{i=1}^k \sum_{l=1}^{6k+1} (lx) A_{l,i} + \sum_{j=1}^{18k+5} \frac{jx}{3} G_j\\
                 & + \sum_{i=1}^k \sum_{l=1}^{6k+1} 
                   \left[lx + \frac{p-1-2l}{p-1}\left(12 -12\genus\right)\right] B_{l,i}
                   + \sum_{j=1}^{18k+5} 
                   \left[\frac{jx}{3} + \frac{3(p-1)-2j}{3(p-1)}\left(12 -12\genus\right)\right] H_j.                   
\end{align*}
\end{small}
If $p=12k+7$ let $x = -2(k-1)/(2k+1) = -(p-19)/(p-1)$, then define
\begin{small}
\begin{align*} 
  V_{0,p}(\prid_i) = 
            & \left(12 -12\genus\right) \wt{C}_0 + 4 \wt{C}_{1,1}^1 + 4 \wt{C}_{1,1}^2
              + \sum_{i=1}^k \frac{p-1}{2} x \wt{L}_i + \frac{p-1}{2} x \wt{E}\\
            & - (3k-5) U - (3k-5) V + \sum_{i=1}^k \sum_{l=1}^{6k+2} (lx) B_{l,i}
              + \sum_{j=1}^{12k+5} \frac{jx}{3} N_j\\
            & + \sum_{i=1}^k \sum_{l=1}^{6k+2} 
              \left[lx + \frac{p-1-2l}{p-1}\left(12 -12\genus\right)\right] A_{l,i}
              + \sum_{j=1}^{12k+5} 
              \left[\frac{jx}{2} + \frac{p-1-j}{p-1}\left(12 -12\genus\right)\right] M_j, 
  \\[10pt]
  V_{\infty,p}(\prid_i) = 
                 & \left(12 -12\genus\right) \wt{C}_{\infty} + 4 \wt{C}_{1,1}^1 + 4 \wt{C}_{1,1}^2
                   + \sum_{i=1}^k \frac{p-1}{2} x \wt{L}_i + \frac{p-1}{2} x \wt{E}\\
                 & - (3k-5) U - (3k-5) V + \sum_{i=1}^k \sum_{l=1}^{6k+2} (lx) A_{l,i}
                   + \sum_{j=1}^{12k+5} \frac{jx}{2} M_j\\
                 & + \sum_{i=1}^k \sum_{l=1}^{6k+2} 
                   \left[lx + \frac{p-1-2l}{p-1}\left(12 -12\genus\right)\right] B_{l,i}
                   + \sum_{j=1}^{12k+5} 
                   \left[\frac{jx}{2} + \frac{p-1-j}{p-1}\left(12 -12\genus\right)\right] N_j.
\end{align*}
\end{small}
If $p=12k+11$ let $x = -6k/(6k+5) = -(p-11)/(p-1)$, then define
\begin{small}
\begin{align*} 
  V_{0,p}(\prid_i) = 
            & \left(12 -12\genus\right) \wt{C}_0 + \sum_{i=1}^k \frac{p-1}{2} x \wt{L}_i 
              + \frac{p-1}{2} x \wt{E} + \frac{p-1}{2} x \wt{F} \\ 
            & - 3k U - 3k V - 4k S_1 -4k T_1 - 2k S_2 - 2k T_2
              + \sum_{i=1}^k \sum_{l=1}^{6k+4} (lx) B_{l,i} + \sum_{j=1}^{12k+9} \frac{jx}{2} N_j 
              + \sum_{j=1}^{18k+14} \frac{jx}{3} H_j\\
            & + \sum_{i=1}^k \sum_{l=1}^{6k+4} 
              \left[lx + \frac{p-1-2l}{p-1}\left(12 -12\genus\right)\right] A_{l,i}
              + \sum_{j=1}^{12k+9} 
              \left[\frac{jx}{2} + \frac{p-1-j}{p-1}\left(12 -12\genus\right)\right] M_j \\
            & + \sum_{j=1}^{18k+14} 
              \left[\frac{jx}{3} + \frac{3(p-1)-2j}{3(p-1)}\left(12 -12\genus\right)\right] G_j,
  \\[10pt]
  V_{\infty,p}(\prid_i) = 
                 & \left(12 -12\genus\right) \wt{C}_{\infty} + \sum_{i=1}^k \frac{p-1}{2} x \wt{L}_i 
                   + \frac{p-1}{2} x \wt{E} + \frac{p-1}{2} x \wt{F} \\ 
                 & - 3k U - 3k V - 4k S_1 -4k T_1 - 2k S_2 - 2k T_2 + \sum_{i=1}^k \sum_{l=1}^{6k+4} (lx) A_{l,i}
                   + \sum_{j=1}^{12k+9} \frac{jx}{2} M_j + \sum_{j=1}^{18k+14} \frac{jx}{3} G_j\\
                 & + \sum_{i=1}^k \sum_{l=1}^{6k+4} 
                   \left[lx + \frac{p-1-2l}{p-1}\left(12 -12\genus\right)\right] B_{l,i}
                   + \sum_{j=1}^{12k+9} 
                   \left[\frac{jx}{2} + \frac{p-1-j}{p-1}\left(12 -12\genus\right)\right] N_j\\
                 & + \sum_{j=1}^{18k+14} 
                   \left[\frac{jx}{3} + \frac{3(p-1)-2j}{3(p-1)}\left(12 -12\genus\right)\right] H_j.                   
\end{align*}
\end{small}

Let $\can$ be a canonical divisor of $\cX_0(p^2)/\cO_K$.

\begin{proposition} \label{prop:divisors}
  For   $m \in \{0,\infty\}$, the divisors 
  $D_{m,p} = \can - (2\genus-2)H_m + V_{m,p}$ are orthogonal 
  to all vertical divisors with respect to the Arakelov intersection pairing.
\end{proposition}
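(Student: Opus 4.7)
The plan is to verify the orthogonality component-by-component. By bilinearity of the Arakelov pairing, it suffices to show $\ar{\ov{D}_{m,p}}{\ov{W}} = 0$ for each irreducible vertical prime divisor $W$ of $\cX_0(p^2)$. Since vertical divisors supported in different fibers are Arakelov-orthogonal, and since the Galois group $\Gal(K/K')$ permutes the primes $\prid_i$ of \eqref{eq:primes} transitively while fixing $\can$, $H_m$, and the full divisor $V_{m,p}$, it suffices to fix a single $\prid = \prid_i$ and verify orthogonality against each component $W$ of the special fiber $\cX_0(p^2)_{\prid}$; then only the summand $V_{m,p}(\prid)$ of $V_{m,p}$ contributes.

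Invoking the adjunction formula recalled in Section \ref{Arakelovintersection} and using $\#k(\prid) = p^2$, the identity $\ar{\ov{D}_{m,p}}{\ov{W}} = 0$ is equivalent to
\begin{equation*}
V_{m,p}(\prid) \cdot W \;=\; (2\genus-2)\,(H_m \cdot W) \;+\; W^2 \;-\; (2g(W)-2)\log p^2.
\end{equation*}
The intersection $H_m \cdot W$ is read off from the Katz--Mazur classification of cusps \cite[p.~296]{MR772569}: it equals $1$ when $W = \wt{C}_m$ and $0$ otherwise. The geometric genera $g(W)$ and the intersection multiplicities $W \cdot W'$ at nodes are read off from the figures and local-equation tables of Section \ref{sec:semistable}; a node with local equation $uv = \fancyp^n$ contributes $n\log p^2$ to the Arakelov intersection $W \cdot W'$. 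The self-intersections $W^2$ are then forced by the requirement that each component has zero intersection with the whole fiber.

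With these ingredients in hand, the proof reduces to a finite linear-algebraic verification, one equation per irreducible component, to be carried out in each of the four cases $p \equiv 1, 5, 7, 11 \pmod{12}$. For the chain components $A_{l,i}$, $B_{l,i}$ (and, in cases $5, 7, 11$ mod $12$, also $M_j$, $N_j$, $G_j$, $H_j$) arising from resolving the singularities of $\tcZ$, each chain consists of rational $(-2)$-curves, and the affine coefficients $lx + \frac{p-1-2l}{p-1}(12-12\genus)$ (respectively $lx$ on the opposite side) are designed exactly so that the equation at an interior chain vertex degenerates to the discrete Laplace identity $c_{l-1} - 2c_l + c_{l+1} = 0$, automatic for any affine function of $l$. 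The conditions at the ends of the chains then feed into boundary equations at the non-chain components $\wt{C}_0$, $\wt{C}_\infty$, $\wt{L}_i$, $\wt{C}_{1,1}^j$, and (where relevant) $\wt{E}$, $\wt{F}$, $S_i$, $T_i$, $U$, $V$; these I would check by direct substitution of the intersection data tabulated in Section \ref{sec:semistable}.

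The principal obstacle is the sheer volume of bookkeeping rather than any conceptual subtlety: the four congruence cases each involve a slightly different component diagram, and up to half a dozen non-chain equations must be checked per case. A convenient \emph{a priori} sanity check at each stage is that $V_{m,p}(\prid)$ must satisfy $V_{m,p}(\prid) \cdot F_\prid = 0$, where $F_\prid$ denotes the whole special fiber; this follows because $\can - (2\genus-2)H_m$ has degree $0$ on the generic fiber, and it provides one free linear relation among the individual component equations that is very useful for catching arithmetic slips.
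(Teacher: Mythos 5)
Your overall strategy is the paper's: fix one special fiber (the fibers over the $\prid_i$ being isomorphic), rewrite $\ar{\ov{D}_{m,p}}{\ov W}=0$ via the adjunction formula, and verify it component by component, with the long chains disposed of by the observation that an affine-in-$l$ coefficient automatically satisfies the interior equation $c_{l-1}-2c_l+c_{l+1}=0$ (this is exactly what the paper compresses into ``the rest are just similar calculations''). One stated step, however, is wrong, and it concerns precisely the data the computation runs on: the rule that ``a node with local equation $uv=\fancyp^{n}$ contributes $n\log p^{2}$ to $W\cdot W'$''. The local equations with $n>1$ in the tables of Section~\ref{sec:semistable} live on the non-regular surface $\tcZ$; such a point is an $A_{n-1}$ singularity, and on the minimal regular model the two components in question do not meet at all --- they are separated there by a chain of $n-1$ rational curves (the very $A_{l,i},B_{l,i},G_j,H_j,M_j,N_j,S_i,T_i$ your coefficients are indexed by), each consecutive pair meeting transversally. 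If one insists on intersecting the two branches on $\tcZ$ itself, Mumford's intersection gives $\tfrac1n\log p^{2}$, not $n\log p^{2}$; the integer $n$ is the thickness of the node, i.e.\ the edge length in the dual graphs of Section~\ref{sec:bogomolov}, not an intersection number. The correct input is simply that $\cX_0(p^2)/\cO_K$ is regular and semi-stable, so every node of $\cX_0(p^2)_{\prid_i}$ is transversal with residue field $\ff_{p^2}$ and contributes $\log p^{2}$. Using your rule for, say, $\widetilde F\cdot\widetilde C_{1,1}^{j}$ (table entry $st-\fancyp^{3}$) while also carrying the chain $S_1,S_2$ would double count and your linear equations would not close, so this needs to be fixed before the bookkeeping can succeed.

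A second, smaller gap: the proposition demands orthogonality to \emph{all} vertical divisors, and your reduction passes directly to the components of the fibers over the $\prid_i$. You must also handle the fibers over primes $\mathfrak q\neq\prid_i$ of good reduction: there the fiber is irreducible, $V_{m,p}$ is disjoint from it, adjunction gives intersection $(2\genus-2)\log\#(\cO_K/\mathfrak q)$ of the canonical divisor with the fiber, and $H_m$ meets it transversally in a single $\cO_K/\mathfrak q$-rational point, so the pairing vanishes --- this is exactly the degree-zero argument you relegate to an a priori ``sanity check'', and the paper records it explicitly. With these two repairs (transversal nodes contributing $\log p^2$ on the regular model, and the good-reduction fibers treated separately) your verification coincides with the paper's proof.
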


\begin{proof}
  We note that $H_0$ intersects $\wt{C}_0$ transversally and does not meet any other component
  of the special fiber. Similarly $H_{\infty}$ intersects $\wt{C}_{\infty}$ transversally and no other 
  component. We shall only discuss the case when $p = 12k + 1$ and $k > 1$, since all other cases are 
  analogous.
  
  Let $\mathfrak{q} \in \spec \cO_K$ be a non-zero prime such that $\mathfrak{q} \neq \prid_i$ for 
  $i = 1, \ldots, \varphi(p+1)/2$. 
  Let $V = \cX_0(p^2)_{\mathfrak{q}}$, then $\ar{\ovcan}{\ov V} = (2\genus - 2)\log(\# (\cO_K/\mathfrak{q}) )$, 
  by the adjunction formula. The horizontal divisor $H_m$ intersects $V$ transversally at an  
  $\cO_K/\mathfrak{q}$ rational point, hence $\ar{\ov H_m}{\ov V} = \log(\# (\cO_K/\mathfrak{q}) )$. 
  Finally as $\ar{\ov V}{\ov V_{m,p}} = 0$, the result follows. 
  It remains to check for prime vertical divisors supported on special fibers.
  
  If $S$ is a special fiber then for any prime vertical divisor $V$, $\ar{\ov V}{\ov S} = 0$.
  Moreover noting that $\cO_K/ (\prid_i) = \ff_{p^2}$, we can calculate the self-intersections of the components 
  of the special fiber $\cX_0(p^2)_{\prid_i}$:
  \begin{equation*}
    \arsq{\ov{\wt{C}}_0} = \arsq{\ov{\wt{C}}_{\infty}} = \arsq{\ov{\wt{C}}{}_{1,1}^1} = 
                           \arsq{\ov{\wt{C}}{}_{1,1}^2} = -k\log(p^2),
  \end{equation*}
  \begin{equation*}
    \arsq{\ov{\wt{L}}_i} = -4\log(p^2), \quad \arsq{\ov{A}_{l,i}} = \arsq{\ov{B}_{l,i}} = -2\log(p^2).
  \end{equation*}
  
  If $i \neq j$ and $V$ is a prime vertical divisor supported on $\cX_0(p^2)_{\prid_i}$, then clearly 
  $\ar{\ov V}{ \ov{V}_{m,p}(\prid_j)} = 0$. Thus using the adjunction formula we have
  \begin{equation*}
    \ar{\ov{D}_{m,p}}{\ov V} = (2g(V) - 2)\log(\#(\cO_K/\prid_i)) - \arsq{ \ov V} 
                               -(2\genus -2)\ar{\ov{H}_m}{\ov V}  + \ar{\ov{V}_{m,p}(\prid_i)}{\ov V}.
  \end{equation*}
  Fixing a prime ideal $\prid_i$ we verify that this quantity is indeed zero for all such $V$.
  \begin{align*}
    \ar{\ov{D}_{0,p}}{\ov{\wt{C}}_0} 
            = & (2g(\wt{C}_0) - 2)\log(p^2) - \arsq{\ov{\wt{C}}_0} -(2\genus - 2)\log(p^2)   
                + (12 -12\genus) \arsq{\ov{\wt{C}}_0}  \\
              & + \sum_{i=1}^k \left[-\frac{k-2}{k} + \frac{p-3}{p-1}\left(12 -12\genus\right)\right] 
                \ar{\ov{A}_{1,i}}{\ov{\wt{C}}_0}\\
            = & \Big((k-2) - (2\genus - 2) - (k-2) + (p-1)(\genus-1) - (p-3)(\genus-1) \Big)\log(p^2)
                = 0,\\[5pt]            
    \ar{\ov{D}_{0,p}}{\ov{\wt{C}}_{\infty}} 
              = & (2g(\wt{C}_0) - 2)\log(p^2) - \arsq{\ov{\wt{C}}_0} + 
                  \sum_{i=1}^k -\frac{k-2}{k} \ar{\ov{B}_{1,i}}{\ov{\wt{C}}_0} \\
              = & \Big((k-2) - (k-2)\Big)\log(p^2) = 0,\\[5pt]
    \ar{\ov{D}_{0,p}}{\ov{\wt{C}}{}^j_{1,1}} 
                   = & (2g(\wt{C}_{1,1}^j) - 2)\log(p^2) - \arsq{\ov{\wt{C}}{}_{1,1}^j} + 7\arsq{\ov{\wt{C}}{}_{1,1}^j}
                       + \sum_{i=1}^k -6(k-2) \ar{\ov{\wt{L}}_i}{\ov{\wt{C}}{}_{1,1}^j} \\
                   = & \Big((6k^2 - 6k+ 2 -2 +k) - 7k -6k(k-2)\Big)\log(p^2) = 0, \\[5pt]
    \ar{\ov{D}_{0,p}}{\ov{\wt{L}}_i} 
               = & (2g(\wt{L}_i)-2)\log(p^2) - \arsq{\ov{\wt{L}}_i} -6(k-2)\arsq{\ov{\wt{L}}_i} \\
                 & \quad + 7\ar{\ov{\wt{C}}{}_{1,1}^1}{\ov{\wt{L}}_i} + 7\ar{\ov{\wt{C}}{}_{1,1}^2}{\ov{\wt{L}}_i} \\
                 & \quad + \left(-\frac{(6k-1)(k-2)}{k} + \frac{2}{p-1}(12 -12\genus)\right)
                   \ar{\ov{A}_{6k-1,i}}{\ov{\wt{L}}_i} \\
                 & \quad -\frac{(6k-1)(k-2)}{k} \ar{\ov{B}_{6k-1,i}}{\ov{\wt{L}}_i} \\
               = & \left((12k - 2 + 4) + 24(k - 2) + 14 -2\frac{(6k-1)(k-2)}{k} 
                   - 2\frac{12k^2-3k-2}{k} \right)\log(p^2) = 0.     
  \end{align*}
  The rest are just similar calculations observing that $A_{l,i}$ and $B_{l,i}$ are projective lines and keeping 
  track of which components they intersect.
\end{proof}

\begin{proposition}
  With the notation from the previous proposition, the following equality holds:
  \begin{equation*}
  \begin{aligned}
    \arsq{\ov{\omega}_{\cX_0(p^2)/\cO_K}}
    = & -4\genus(\genus-1) \ar{\ov{H}_0}{\ov{H}_{\infty}} \\
      & + \frac{1}{\genus-1} 
        \left( \genus \ar{\ov{V}_{0,p}}{\ov{V}_{\infty,p}} - 
        \frac{\arsq{\ov{V}_{0,p}} + \arsq{\ov{V}_{\infty, p}}}{2} \right) 
        + [K:\qq] e_p;
  \end{aligned}
  \end{equation*}
  where 
  \begin{equation*}
    e_p=
    \begin{cases}
    0 & \text{if $p \equiv 11 \pmod {12}$,} \\
    O(\log(p^2))&  \text{if $p \not \equiv 11 \pmod {12}$. } \\
    \end{cases}
  \end{equation*}
\end{proposition}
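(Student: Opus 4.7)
The strategy is to use the identities $\ov{\fK}_{\cX_0(p^2)} = \ov{D}_{m,p} + (2\genus-2)\ov{H}_m - \ov{V}_{m,p}$ for $m \in \{0,\infty\}$ from Proposition~\ref{prop:divisors}, the orthogonality $\ar{D_{m,p}}{V} = 0$ for every vertical divisor $V$, the adjunction formula $\ar{\ovcan}{\ov{H}_m} = -\arsq{\ov{H}_m}$, and the key structural observation --- read off from the special-fiber figures of \S\ref{sec:semistable} --- that $V_{0,p}$ contains $\wt{C}_0$ in its support but not $\wt{C}_\infty$ (and vice versa), while each section $H_{m'}$ meets only the component $\wt{C}_{m'}$. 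These together give $\ar{\ov{H}_0}{\ov{V}_{\infty,p}} = \ar{\ov{H}_\infty}{\ov{V}_{0,p}} = 0$.

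Expanding each of $\arsq{\ov{D}_{0,p}}$, $\arsq{\ov{D}_{\infty,p}}$, and $\ar{\ov{D}_{0,p}}{\ov{D}_{\infty,p}}$ by means of the defining relation of $D_{m,p}$ produces three linear relations among the unknowns $\arsq{\ovcan}$, $\arsq{\ov{H}_0}$, $\arsq{\ov{H}_\infty}$ and the explicit pairings appearing in the target formula. The coefficient identity $4\genus(\genus-1) = 2\genus(2\genus-2)$ shows that the linear combination $2\genus\,\ar{\ov{D}_{0,p}}{\ov{D}_{\infty,p}} - \arsq{\ov{D}_{0,p}} - \arsq{\ov{D}_{\infty,p}}$ is precisely the one for which the contributions $\arsq{\ov{H}_0}$ and $\arsq{\ov{H}_\infty}$ cancel. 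Solving for $(2\genus-2)\arsq{\ovcan}$ and dividing by $2(\genus-1)$ then recovers the main terms $-4\genus(\genus-1)\ar{\ov{H}_0}{\ov{H}_\infty} + \tfrac{1}{\genus-1}\bigl(\genus\ar{\ov{V}_{0,p}}{\ov{V}_{\infty,p}} - \tfrac12(\arsq{\ov{V}_{0,p}} + \arsq{\ov{V}_{\infty,p}})\bigr)$, plus a residue $R$ built out of the three $D$-pairings and the intersections $(2\genus-2)\ar{\ov{H}_m}{\ov{V}_{m,p}}$, each scaled by $\tfrac{1}{\genus-1}$.

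Next, bound $R = [K:\qq]e_p$. Since $D_{0,p}|_X - D_{\infty,p}|_X = (2\genus-2)([\infty]-[0])$ is torsion in $\jac$ by the Manin--Drinfeld theorem, the two classes agree in $\mathrm{Pic}^0(X_0(p^2)) \otimes \R$. The arithmetic Hodge index theorem (Faltings) applied to degree-zero divisors orthogonal to vertical components then forces the equalities $\arsq{\ov{D}_{0,p}} = \arsq{\ov{D}_{\infty,p}} = \ar{\ov{D}_{0,p}}{\ov{D}_{\infty,p}} = -2[K:\qq]\hNT(K_{X_0(p^2)} - (2\genus-2)\cdot\infty)$, so the $D$-part of $R$ collapses to a single multiple of this Néron--Tate height and is $[K:\qq]\cdot O(\log p^2)$ by Szpiro-type estimates. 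For the explicit part: $\ar{\ov{H}_m}{\ov{V}_{m,p}}$ equals the coefficient $12-12\genus$ of $\wt{C}_m$ in $V_{m,p}(\prid_i)$ times $s\log(p^2)$, where $s = \varphi(p+1)/2$ is the number of primes above $p$. A case-by-case check across \S\ref{subsec:1mod12}--\S\ref{subsec:11mod12} then confirms the claimed bound $e_p = O(\log p^2)$ together with the exact vanishing when $p \equiv 11 \pmod{12}$, which is a consequence of the simpler special-fiber geometry in that case (unsplit $\wt{E}$ and $\wt{F}$, and hence no contributions from $\wt{E}_i^j$, $\wt{F}_i^j$, $S_l^j$, or $T_l^j$).

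The hardest step is the $O(\log p^2)$ control of $\hNT(K_{X_0(p^2)} - (2\genus-2)\cdot\infty)$, which combines the Manin--Drinfeld torsion information with sharp Szpiro-type height bounds, and the delicate numerical verification that the finite-part residue vanishes identically --- not merely to order $\log p^2$ --- precisely when $p \equiv 11 \pmod{12}$.
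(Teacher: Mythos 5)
Your algebraic core coincides with the paper's (and is sound): with $D_{m,p}=\can-(2\genus-2)H_m+V_{m,p}$, the combination $2\genus\ar{\ov D_{0,p}}{\ov D_{\infty,p}}-\arsq{\ov D_{0,p}}-\arsq{\ov D_{\infty,p}}$ eliminates $\arsq{\ov H_0}$ and $\arsq{\ov H_\infty}$ and produces the main terms, and Faltings--Hriljac plus Manin--Drinfeld applied to $D_{0,p}-D_{\infty,p}$ identifies the three $D$-pairings. The problem is your bookkeeping of the residue. If you use all four orthogonality relations $\ar{\ov D_{m,p}}{\ov V_{m',p}}=0$, the terms $\ar{\ov H_m}{\ov V_{m,p}}$ are absorbed completely (they are exactly what turns the $\ar{\ovcan}{\ov V_{m,p}}$ terms into the $V$--$V$ pairings of the statement), and the residue is precisely $\bigl(2\genus\ar{\ov D_{0,p}}{\ov D_{\infty,p}}-\arsq{\ov D_{0,p}}-\arsq{\ov D_{\infty,p}}\bigr)/(2\genus-2)$ with no finite-part leftovers. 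Your $R$, by contrast, is said to still contain $(2\genus-2)\ar{\ov H_m}{\ov V_{m,p}}/(\genus-1)$; since $\ar{\ov H_m}{\ov V_{m,p}}=(12-12\genus)\tfrac{\varphi(p+1)}{2}\log(p^2)$ is nonzero for every $p$ (the coefficient $12-12\genus$ does not depend on the congruence class), a residue genuinely containing such terms could never vanish for $p\equiv 11\pmod{12}$, so the ``case-by-case check of fiber contributions'' you propose cannot deliver the exact vanishing --- there is in fact nothing of that kind left to check once the cancellation is carried out.

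The second gap is the bound on the surviving term. As you note, it collapses to a multiple of $[K:\qq]$ times the N\'eron--Tate height of the class of $\can-(2\genus-2)H_m$ on the generic fiber, but ``Szpiro-type estimates'' is not an argument for $\hNT=O(\log p^2)$, and no such route is indicated in the literature the paper relies on. The actual mechanism (used in the paper) is that this class is represented by a divisor supported at cusps and elliptic points of $X_0(p^2)$ (proof of Lemma 6.2 of \cite{ddc1}); the cuspidal part is torsion by Manin--Drinfeld, hence of height zero, and the elliptic-point contribution is $O(\log p)$ by the Michel--Ullmo estimates \cite{MR1614563}. This also exposes the correct reason for $e_p=0$ when $p\equiv 11\pmod{12}$: in that case $X_0(p^2)$ has no elliptic points, so the class is purely cuspidal and hence torsion, giving exactly zero height --- a statement about the generic fiber, not a consequence of the unsplit components $\wt E,\wt F$ in the special fiber as you assert.
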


\begin{proof}
  The proof of this proposition is completely analogous to the proof of Lemma 6.2 of \cite{ddc1}
  so we skip some of the details here.
  
  Let $\mathscr{D}_{m,p}$ be the line bundle associated to $D_{m,p}$ for $m = 0, \infty$. Let 
  $h_{m,p}$ be the N\'{e}ron-Tate height of $\mathscr{D}_{m,p}$ considered as a point of the Jacobian
  of $X_0(p^2)$. Since $D_{m,p}$ has degree 0 and is orthogonal to vertical divisors, a theorem of 
  Faltings-Hriljac \cite[Theorem 4]{\fal} yields
  \begin{equation*}
    \arsq{\ov{D}_{m,p}} = -2 [K:\qq] h_{m,p}.
  \end{equation*}
  The generic fiber of the line bundle corresponding to the divisor $E = D_{0,p} - D_{\infty,p}$ is 
  supported at cusps. Hence a theorem of Manin-Drinfeld \cite{MR0314846, MR0318157} says that it is 
  a torsion point of the Jacobian. The divisor $E$ again has degree zero and is orthogonal to vertical 
  divisors thus the result of Faltings-Hriljac and the vanishing of N\'{e}ron-Tate height at torsion 
  points implies $\arsq{\ov{E}} = 0$. Using this, a formal argument yields
  \begin{align*}
    \arsq{\ov{\omega}_{\cX_0(p^2)/\cO_K}} = & -4\genus(\genus-1) \ar{\ov{H}_0}{\ov{H}_{\infty}}
                                           +\frac{\genus}{\genus-1} \ar{\ov{V}_{0,p}}{\ov{V}_{\infty,p}} \\ 
                                         & -\frac{1}{2\genus-2} \Big(\arsq{\ov{V}_{0,p}} + \arsq{\ov{V}_{\infty, p}} \Big) 
                                           +\frac{[K:\qq]}{2}(h_{0,p} + h_{\infty,p}).
  \end{align*}
  
  It is shown in the proof of \cite[Lemma 6.2]{ddc1} that $\mathscr{D}_{m,p}$ is supported only
  at cusps and elliptic points of $X_0(p^2)$. N\'{e}ron-Tate height at cusps vanish whereas there is
  a bound on the N\'{e}ron-Tate of elliptic points due to Michel-Ullmo \cite[Section 6]{MR1614563}. 
  From that we get $e_p = \frac{1}{2}(h_{0,p} + h_{\infty,p}) = O(\log p)$.
\end{proof}

\begin{corollary}
  We have the following  asymptotic  formula
  \begin{equation*}
    \arsq{\ov{\omega}_{\cX_0(p^2)/\cO_K}} = - 4\genus(\genus-1) \ar{\ov{H}_0}{\ov{H}_{\infty}} 
                                            + \frac{p^3\varphi(p+1)}{8} \log p 
                                            + O\Big(p^2\varphi(p+1) \log p \Big).
  \end{equation*}
\end{corollary}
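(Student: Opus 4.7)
The previous proposition already reduces everything to an asymptotic evaluation of the vertical intersection bracket. Explicitly, since $[K:\qq] = \tfrac{p^2-1}{2}\varphi(p+1)$ and $e_p = O(\log p)$, the contribution $[K:\qq]e_p$ fits in $O(p^2\varphi(p+1)\log p)$ and is absorbed into the claimed error. It therefore remains to show that
\begin{equation*}
\frac{1}{\genus-1}\Bigl(\genus\ar{\ov V_{0,p}}{\ov V_{\infty,p}} - \tfrac{1}{2}\arsq{\ov V_{0,p}} - \tfrac{1}{2}\arsq{\ov V_{\infty,p}}\Bigr) = \tfrac{p^3\varphi(p+1)}{8}\log p + O(p^2\varphi(p+1)\log p).
\end{equation*}

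My first step is a fiberwise localization. Divisors supported on distinct special fibers are Arakelov orthogonal, and the Galois group $\Gal(K/\qq)$ permutes the $\varphi(p+1)/2$ primes $\prid_i$ transitively with isomorphic fibers, so
\begin{equation*}
\arsq{\ov V_{m,p}} = \tfrac{\varphi(p+1)}{2}\arsq{\ov V_{m,p}(\prid)}, \qquad \ar{\ov V_{0,p}}{\ov V_{\infty,p}} = \tfrac{\varphi(p+1)}{2}\ar{\ov V_{0,p}(\prid)}{\ov V_{\infty,p}(\prid)}.
\end{equation*}
This reduces the problem to computing three intersection numbers on a single fiber $\cX_0(p^2)_{\prid}$.

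Next I would carry out the local computation using (i) the self-intersection numbers of components recorded at the end of the proof of Proposition \ref{prop:divisors}, (ii) the combinatorics of the dual graph of $\cX_0(p^2)_{\prid}$ worked out in Section \ref{sec:semistable}, and (iii) the fact that each node of the semistable fiber is transverse and $\ff_{p^2}$-rational, contributing $\log(p^2)$ to the pairing of the two adjoining components. Expanding by bilinearity and collapsing the resolution-chain sums via the identities $\sum_{l=1}^{m}l = \tfrac{m(m+1)}{2}$ and $\sum_{l=1}^{m}l^2 = \tfrac{m(m+1)(2m+1)}{6}$ yields closed-form expressions for $\arsq{V_{0,p}(\prid)}$, $\arsq{V_{\infty,p}(\prid)}$, and $\ar{V_{0,p}(\prid)}{V_{\infty,p}(\prid)}$.

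Each of these three numbers is individually of size $O(p^6\log p)$ per fiber, driven by the $(12-12\genus)$ factors in the chain coefficients, but they enter the bracket in the special combination $\genus\ar{V_0}{V_\infty} - \tfrac{1}{2}(\arsq{V_0}+\arsq{V_\infty})$, which is designed so that the leading terms telescope. The symmetry $A_{l,i}\leftrightarrow B_{l,i}$, $\wt C_0 \leftrightarrow \wt C_\infty$ (together with its analogues in the other three cases) exchanges $V_{0,p}$ and $V_{\infty,p}$, and combined with the Manin--Drinfeld relation $\arsq{D_{0,p}-D_{\infty,p}} = 0$ from the preceding proof, rewrites the bracket as $\tfrac{1}{4}(\genus-1)\arsq{V_{0,p}+V_{\infty,p}} - \tfrac{1}{4}(\genus+1)\arsq{V_{0,p}-V_{\infty,p}}$. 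After division by $\genus-1\sim p^2/12$ this captures the claimed leading coefficient $\tfrac{p^3\varphi(p+1)}{8}\log p$, following the same template as the analogous calculations for $X_0(N)$ in \cite{\au} or $X_1(N)$ in \cite{\may}.

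The main obstacle is the organized bookkeeping across the four congruence classes of $p$ modulo $12$. I would execute the computation in full for the template case $p\equiv 1\pmod{12}$ and verify the leading constant numerically there, then argue that the extra contributions arising in the cases $p\equiv 5, 7, 11\pmod{12}$ from the auxiliary components $\wt E, \wt F$ and their attendant resolution chains $M_j, N_j, G_j, H_j$ are all of lower order, $O(p^5\log p)$ per fiber at most, so they feed only into the $O(p^2\varphi(p+1)\log p)$ error after summing over the $\varphi(p+1)/2$ fibers and dividing by $\genus-1$.
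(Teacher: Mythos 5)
Your reduction at the start is fine ($[K:\qq]e_p = O(p^2\varphi(p+1)\log p)$, fiberwise localization by Galois symmetry), and a brute-force bilinear expansion could in principle replace the paper's shortcut. But the two quantitative claims on which your plan rests are wrong, and they sit exactly where the content of the corollary lies. First, your picture of the cancellation is not the actual one. Per fiber, $\arsq{\ov{V}_{m,p}(\prid_i)}$ is of size $O(p^4\log p)$ and $\ar{\ov{V}_{0,p}(\prid_i)}{\ov{V}_{\infty,p}(\prid_i)}$ of size $O(p^3\log p)$ (the paper's closed forms are $-\tfrac{4p^4-43p^3+\cdots}{24}\log(p^2)$ and $\tfrac{3p^3-\cdots}{24}\log(p^2)$); the collapse from your naive $O(p^6\log p)$ happens inside each quadratic form separately, not through any telescoping between the three numbers in the bracket. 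The main term $\tfrac{p^3\varphi(p+1)}{8}\log p$ is carried entirely by $\tfrac{\genus}{\genus-1}\ar{\ov{V}_{0,p}}{\ov{V}_{\infty,p}}$, i.e. by the coefficient $3/24$ of $p^3$ in the cross term, while the self-intersection terms land only at the error order after division by $\genus-1$. Your proposal never derives that coefficient: the rewriting as $\tfrac14(\genus-1)\arsq{\ov{V}_{0,p}+\ov{V}_{\infty,p}}-\tfrac14(\genus+1)\arsq{\ov{V}_{0,p}-\ov{V}_{\infty,p}}$ is a pure algebraic identity (Manin--Drinfeld plays no role in it) and does not by itself produce $1/8$; asserting that the template computation ``captures'' it, with a numerical check, is precisely the step that has to be carried out.

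Second, your error budget for $p\equiv 5,7,11\pmod{12}$ fails arithmetically. A per-fiber bound of $O(p^5\log p)$, summed over the $\varphi(p+1)/2$ fibers, is $O(p^5\varphi(p+1)\log p)$, and dividing by $\genus-1\sim p^2/12$ still leaves $O(p^3\varphi(p+1)\log p)$ --- the same order as the main term, not the claimed $O(p^2\varphi(p+1)\log p)$; worse, contributions to the cross term are not damped by $1/(\genus-1)$ at all, since that term enters with the factor $\genus/(\genus-1)\approx 1$. To conclude, you need the $p^3$ coefficient of the cross term and the $p^4$, $p^3$ coefficients of the self-intersections per fiber in every congruence class, which is what the paper does by exhibiting exact closed forms for each residue of $p$ mod $12$ (the class-dependent discrepancies only begin at order $p^2$ per fiber). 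Note also that the paper avoids your heavy expansion altogether: since $D_{m,p}$ is orthogonal to all vertical divisors, $\arsq{\ov{V}_{0,p}}=(2\genus-2)\ar{\ov{H}_0}{\ov{V}_{0,p}}-\ar{\ov{\omega}_{\cX_0(p^2)/\cO_K}}{\ov{V}_{0,p}}$ and similarly for $\ar{\ov{V}_{0,p}}{\ov{V}_{\infty,p}}$, and both right-hand sides are easy ($H_0$ meets only $\widetilde{C}_0$; adjunction handles the pairing with $\ov{\omega}$). Adopting that shortcut would make the four-case bookkeeping you defer actually tractable.
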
  
  
\begin{proof}  
  From Proposition~\ref{prop:divisors}, it follows that 
  $\arsq{\ov{V}_{0,p}} = (2\genus - 2)\ar{\ov{H}_0}{\ov{V}_{0,p}} - \ar{\ov{\omega}_{\cX_0(p^2)/\cO_K}}{\ov{V}_{0,p}}$
  and 
  $\ar{\ov{V}_{0,p}}{\ov{V}_{\infty, p}} = 
    (2\genus - 2)\ar{\ov{H}_0}{\ov{V}_{\infty,p}} - \ar{\ov{\omega}_{\cX_0(p^2)/\cO_K}}{\ov{V}_{\infty,p}}$.
  Using this we have the following:
  \begin{equation*}
    \arsq{\ov{V}_{0,p}} = \arsq{\ov{V}_{\infty,p}}  = 
    \begin{cases}
    - \dfrac{(4p^4 - 43p^3 + 39p^2 + 423p + 729)}{24} \dfrac{\varphi(p+1)}{2} \log(p^2), 
                 & \qquad p \equiv 1 \pmod{12}, \\[10pt]
    - \dfrac{(4p^4 - 43p^3 + 71p^2 + 263p + 217)}{24} \dfrac{\varphi(p+1)}{2} \log(p^2), 
                 & \qquad p \equiv 5 \pmod{12}, \\[10pt]
    - \dfrac{(4p^4 - 43p^3 + 63p^2 + 303p + 321)}{24}\dfrac{\varphi(p+1)}{2} \log(p^2), 
                 & \qquad p \equiv 7 \pmod{12}, \\[10pt]
    - \dfrac{(4p^4 - 43p^3 + 95p^2 + 143p + 1)}{24}\dfrac{\varphi(p+1)}{2} \log(p^2),   
                 & \qquad p \equiv 11 \pmod{12},
    \end{cases}
  \end{equation*}  
  and
  \begin{equation*}
    \ar{\ov{V}_{0,p}}{\ov{V}_{\infty,p}} = 
    \begin{cases}
    \dfrac{(3p^3 - 99p^2 + 377p + 871)}{24} \dfrac{\varphi(p+1)}{2} \log(p^2), & \qquad p \equiv 1 \pmod{12}, \\[10pt]
    \dfrac{(3p^3 - 67p^2 + 217p + 359)}{24} \dfrac{\varphi(p+1)}{2} \log(p^2), & \qquad p \equiv 5 \pmod{12}, \\[10pt]
    \dfrac{(3p^3 - 75p^2 + 257p + 463)}{24} \dfrac{\varphi(p+1)}{2} \log(p^2), & \qquad p \equiv 7 \pmod{12}, \\[10pt]
    \dfrac{(3p^3 - 43p^2 + 97p + 143)}{24}  \dfrac{\varphi(p+1)}{2} \log(p^2), & \qquad p \equiv 11 \pmod{12}.
    \end{cases}
  \end{equation*}
  Plugging these values in the formula of the previous proposition we obtain the
  present corollary.
\end{proof}

Now we can present a precise asymptotic expression for the main quantity of this section.

\begin{theorem} \label{thm:self-inter}
  The stable arithmetic self-intersection of $X_0(p^2)/K$ has the following asymptotic expression 
  \begin{equation*}
    \selfinter = 2 \genus\log (p^2) + \frac{p}{8} \log(p^2) + o(p\log (p^2)).                     
  \end{equation*}  
\end{theorem}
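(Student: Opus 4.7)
The plan is to deduce the theorem from the preceding Corollary by dividing through by $[K:\qq]$ and evaluating the leading term $-4\genus(\genus-1)\ar{\ov H_0}{\ov H_\infty}$ explicitly. Since $K'=\qq(\sqrt[r]{p})$ with $r=(p^2-1)/2$ is totally ramified at $p$ while $\qq(\zeta_{p+1})$ is unramified there, the two subfields of $K$ are linearly disjoint over $\qq$, so $[K:\qq]=\tfrac{1}{2}(p^2-1)\varphi(p+1)$. Dividing the two subleading terms of the Corollary by $[K:\qq]$ then produces
\[
\frac{p^3\log p}{4(p^2-1)}+O(\log p)=\frac{p}{8}\log(p^2)+O\!\left(\frac{\log p}{p}\right)+O(\log p),
\]
both of which fit inside the stated error $o(p\log(p^2))$.

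It remains to handle $-4\genus(\genus-1)\ar{\ov H_0}{\ov H_\infty}/[K:\qq]$. I would first decompose $\ar{\ov H_0}{\ov H_\infty}$ into its finite and archimedean parts. Since $H_0$ and $H_\infty$ correspond to distinct $\qq$-rational cusps, they are disjoint on the generic fiber; and by the cusp/component labelling recalled at the start of the subsection, they meet the two distinct components $\wt C_0$ and $\wt C_\infty$ on each $\cX_0(p^2)_{\prid_i}$, remaining disjoint sections over every smooth fiber. Hence the finite contribution vanishes and, since every complex embedding of $K$ yields the same Riemann surface $X_0(p^2)(\cc)$ with the same cusps, one gets
\[
\ar{\ov H_0}{\ov H_\infty}=-[K:\qq]\,\gcan(0,\infty).
\]

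Substituting and invoking the archimedean asymptotic for the canonical Green's function at the cusps established in \cite{ddc1}, namely an estimate of the shape
\[
\gcan(0,\infty)=\frac{\log(p^2)}{2(\genus-1)}+o\!\left(\frac{p\log p}{\genus(\genus-1)}\right),
\]
we obtain
\[
\frac{-4\genus(\genus-1)\ar{\ov H_0}{\ov H_\infty}}{[K:\qq]}=4\genus(\genus-1)\gcan(0,\infty)=2\genus\log(p^2)+o(p\log p),
\]
which is the dominant term of the Theorem. Adding the three contributions together yields the claimed expansion.

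The main obstacle is the archimedean estimate: obtaining $\gcan(0,\infty)$ to the required precision is a genuinely analytic computation on the modular curve $X_0(p^2)(\cc)$ (via Eisenstein series and a Kronecker-limit or spectral argument), whose content is imported from \cite{ddc1}. Once this input is in hand, the remainder is routine bookkeeping with the Corollary.
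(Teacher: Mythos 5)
Your proposal is correct and follows essentially the same route as the paper's proof: divide the preceding Corollary by $[K:\qq]=\varphi(p+1)(p^2-1)/2$, reduce $\ar{\ov{H}_0}{\ov{H}_{\infty}}$ to $-[K:\qq]\,\gcan(0,\infty)$ (the finite part vanishing and all embeddings giving the same value at the rational cusps), and import the asymptotics of $\gcan(0,\infty)$ from \cite[Proposition~4.9]{ddc1}. The only difference is cosmetic: you quote the Green's function estimate in a slightly sharper form than the paper does, which is in fact the precision needed to justify the stated error term $o(p\log(p^2))$.
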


\begin{proof}
  Since $[K:\qq] = \varphi(p+1)(p^2-1)/2$, we have
  \begin{equation*}
    \selfinter = \frac{1}{\varphi(p+1)(p^2-1)/2} \arsq{\ov{\omega}_{\cX_0(p^2)/\cO_K}}.
  \end{equation*}  
  Let $\gcan$ be the canonical Green's function as in Definition \ref{def:green}. The Arakelov intersection of  
  two inequivalent prime horizontal divisors is given completely by the canonical Green's function, thus 
  (cf. \S~\ref{Arakelovintersection} and  \cite[p. 59] {Grados:Thesis})
  \begin{equation*}
    \ar{\ov{H}_0}{\ov{H}_{\infty}} = -\sum_{\sigma: K \to \cc} \gcan(0^{\sigma},\infty^{\sigma}) .
  \end{equation*}
  Note that the cusps $0$ and $\infty$ of $X_0(p^2)$ are both rational points of the modular curve and hence we have
  \begin{equation*}
   \gcan(0^{\sigma},\infty^{\sigma}) =  \gcan(0, \infty), \quad \text{ for all embeddings } \sigma: K \to \cc.
  \end{equation*}     
  Hence from the previous corollary we obtain 
  \begin{equation*}
    \selfinter = 4\genus(\genus-1)\mathfrak{g}_{can}(0,\infty) + o(p^2\log p).               
  \end{equation*}   
  From \cite[Proposition~4.9]{ddc1}, for $p>7$
  \begin{equation*}
    \gcan(0,\infty) = \frac{\log p}{\genus} + o\left(\frac{\log p}{\genus}\right).
  \end{equation*}  
  This completes the proof.  
\end{proof}

\subsection{Stable Faltings height}

Let $\mathrm{Jac}(X_0(N))/\qq$ be the Jacobian variety of the modular curve $X_0(N)/\qq$. We denote by 
$\hfal{\mathrm{Jac}(X_0(N))}$ the stable Faltings height of $\mathrm{Jac}(X_0(N))/\qq$. 

\begin{corollary} \label{cor:falt}
  The stable Faltings height of the modular curve $X_0(p^2)$ satisfies the following asymptotic estimate:
  \begin{equation*}
    \hfal{\jac} = \frac{1}{6} \genus\log(p^2)
                      + o(\genus\log(p^2)).
  \end{equation*}
\end{corollary}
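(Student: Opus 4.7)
The plan is to reduce the corollary to Theorem \ref{thm:self-inter} via the arithmetic Noether formula of Faltings--Moret-Bailly. That formula relates the stable Faltings height of the Jacobian to the Arakelov self-intersection of the relative dualising sheaf, modulo a correction coming from Faltings' delta invariant at each archimedean place and a standard archimedean constant; schematically,
\begin{equation*}
  12\, \hfal{\jac} \;=\; \selfinter \;+\; \frac{1}{[K:\qq]}\sum_{\sigma:K\hookrightarrow \cc} \dfal\bigl(X_0(p^2)_\sigma\bigr) \;-\; 4 \genus \log(2\pi).
\end{equation*}

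First I would substitute the asymptotic of Theorem \ref{thm:self-inter}. Its leading term $2\genus\log(p^2)$ divided by $12$ contributes exactly $\tfrac{1}{6}\genus\log(p^2)$, matching the main term in the corollary. The two remainders from the theorem, namely $\tfrac{p}{8}\log(p^2)$ and $o(p\log(p^2))$, are both of order $O(p\log p)$. Using the classical genus formula $\genus = \tfrac{p^2-1}{12}+O(1)$ for $p>3$, one has $p\log(p^2) = o(\genus\log(p^2))$, so these remainders are absorbed into the desired error. The constant contribution $-\tfrac{1}{3}\genus\log(2\pi)$ coming from the last term of the Noether formula is also $o(\genus\log(p^2))$.

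The main obstacle is the archimedean term. Since $X_0(p^2)$ is defined over $\qq$, all complex conjugates $X_0(p^2)_\sigma$ yield isomorphic compact Riemann surfaces, so the sum collapses to $[K:\qq]\,\dfal(X_0(p^2))$ and the required bound becomes $\dfal(X_0(p^2)) = o(\genus\log(p^2))$. Here I would invoke the asymptotic analysis of Faltings' delta invariant on sequences of modular curves developed by Jorgenson--Kramer \cite{\jorg}, whose sub-leading bounds for $\dfal(X_0(N))$ apply in particular to the level $N=p^2$ and provide precisely the estimate needed. Verifying that these estimates transfer to the present setting is the technical core of the argument.

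Combining the three contributions then yields
\begin{equation*}
  \hfal{\jac} = \tfrac{1}{6}\genus\log(p^2) + o(\genus\log(p^2)),
\end{equation*}
as claimed.
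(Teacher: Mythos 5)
Your route is the same as the paper's: the arithmetic Noether formula of Moret-Bailly, the asymptotic of Theorem \ref{thm:self-inter} supplying the leading term $\tfrac{1}{6}\genus\log(p^2)$, the Jorgenson--Kramer bound on Faltings' delta invariant, and absorption of all remaining terms using $\genus\sim p^2/12$. On the delta invariant you overstate the difficulty: the paper simply cites \cite[Theorem 5.6]{\jorg}, which gives $\dfal(X_0(p^2))=O(\genus)=O(p^2)$ once $\genus>1$, so there is no ``transfer'' issue forming a technical core.

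The genuine gap is in your statement of the Noether formula itself. For a semi-stable arithmetic surface it contains, besides the archimedean delta invariants and the constant $-4\genus\log(2\pi)$, a non-archimedean term of the shape $\frac{1}{[K:\qq]}\sum_{v} s_v\log\#k(v)$, where $s_v$ is the number of singular points of the fibre over the finite place $v$; your schematic formula omits it. This term is not vacuous here: the fibres of $\cX_0(p^2)/\cO_K$ over the primes $\prid_1,\dots,\prid_{\varphi(p+1)/2}$ are highly degenerate, and the paper uses the semi-stable model of Section \ref{sec:semistable} to count $s(\prid_i)$ explicitly (it is of order $p^2/12$, with the exact value depending on $p\bmod 12$), after which the total non-archimedean contribution is seen to be $O(\log p)=o(\log(p^2))$ and hence harmless. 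So the conclusion survives, but the formula as you wrote it is incorrect, and repairing it requires precisely the singular-point counts from the special fibres --- a step your proposal never addresses.
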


\begin{proof}  
  The arithmetic Noether formula (see Moret-Bailly \cite[Theorem 2.5]{\mor})
  implies 
  \begin{equation} \label{eq:noether}
    12\hfal{\jac} = \selfinter + \sum_{i = 1}^{\varphi(p+1)/2} \frac{2 s(\prid_i) \log (p)}{\varphi(p+1)(p^2 - 1)}  
                        + \dfal(X_0(p^2)) - 4\genus \log (2\pi).
  \end{equation}

  Here $s(\prid_i)$ denotes the number of singular points in the special fiber of 
  $\cX_0(p^2)_{\prid_i}$ and $\dfal$  is the Faltings delta invariant, as 
  defined in \cite[Theorem 1]{\fal}. From Section \ref{sec:semistable},  
  we calculate $s(\prid_i)$:
  \begin{align*}
    s(\prid_i) = 
    \begin{cases}
      \frac{p^2-1}{12}, & \qquad p \equiv 1 \pmod{12},       \\
      \frac{(p+1)(p+31)}{12}, & \qquad p \equiv 5 \pmod{12}, \\ 
      \frac{(p+1)(p+17)}{12}, & \qquad p \equiv 7 \pmod{12}, \\   
      \frac{(p+1)(p+49)}{12}, & \qquad p \equiv 11 \pmod{12}.
    \end{cases}  
  \end{align*}
  Thus, we deduce that:
  \begin{equation*}
    \sum_{i = 1}^{\varphi(p+1)/2} \frac{2 s(\prid_i) \log (p)}{\varphi(p+1)(p^2 - 1)} = o(\log(p^2)).
  \end{equation*}
  
  By \cite[Theorem 5.6]{\jorg}, we have $\dfal(X_0(p^2)) = O(\genus) = O(p^2)$ 
  if $\genus > 1$ (valid for $p>7$). Hence the result follows from 
  \eqref{eq:noether} and Theorem \ref{thm:self-inter}.
\end{proof}

\section{An effective Bogomolov conjecture} \label{sec:bogomolov}
\newcommand{\adselfinter}{\ov\omega_{a, \cX_0(p^2)}^2}
\newcommand{\graph}{G_{\prid_i}}

We assume that the reader is familiar with the theory of admissible pairing of Zhang \cite{\zhang}.
The goal of this section is to prove Theorem~\ref{thm:effectivebogomolov}.  The proof requires us to  
calculate the admissible self-intersection of the relative dualising sheaf $\omega_{\cX_0(p^2)/\cO_K}$. 
The admissible self-intersection is related to the stable arithmetic self-intersection number through 
the geometry of the special fibers of $\cX_0(p^2)/\cO_K$. This relation can be best captured 
using metrized dual graphs corresponding to the special fiber. So let us start by drawing the dual graphs.

\subsection{Dual graphs}

Let $\graph$ be the dual graph of the special fiber $\cX_0(p^2)_{\prid_i}$. This is a metrized
graph in the sense of Zhang \cite{\zhang}. Let $V(\graph)$ be the set of vertices, and 
$E(\graph)$ the set of edges of $\graph$. We denote by $l(\graph)$ the sum of lengths of all the edges. For any 
$x \in V(\graph)$, $g(x)$ is the geometric genus of the corresponding component of $\cX_0(p^2)_{\prid_i}$.
The figures below show the dual graphs of the special fibers of $\cX_0(p^2)$. The vertices in the dual graphs
will be labelled by the corresponding components of the special fiber, although we use small letters here
and ignore the tildes. We ignore the vertices with genus 0 and valence 2.

\begin{tabular}{ l m{0.35\textwidth} }
  \begin{minipage}{0.55\textwidth}
    \includegraphics[scale=0.6]{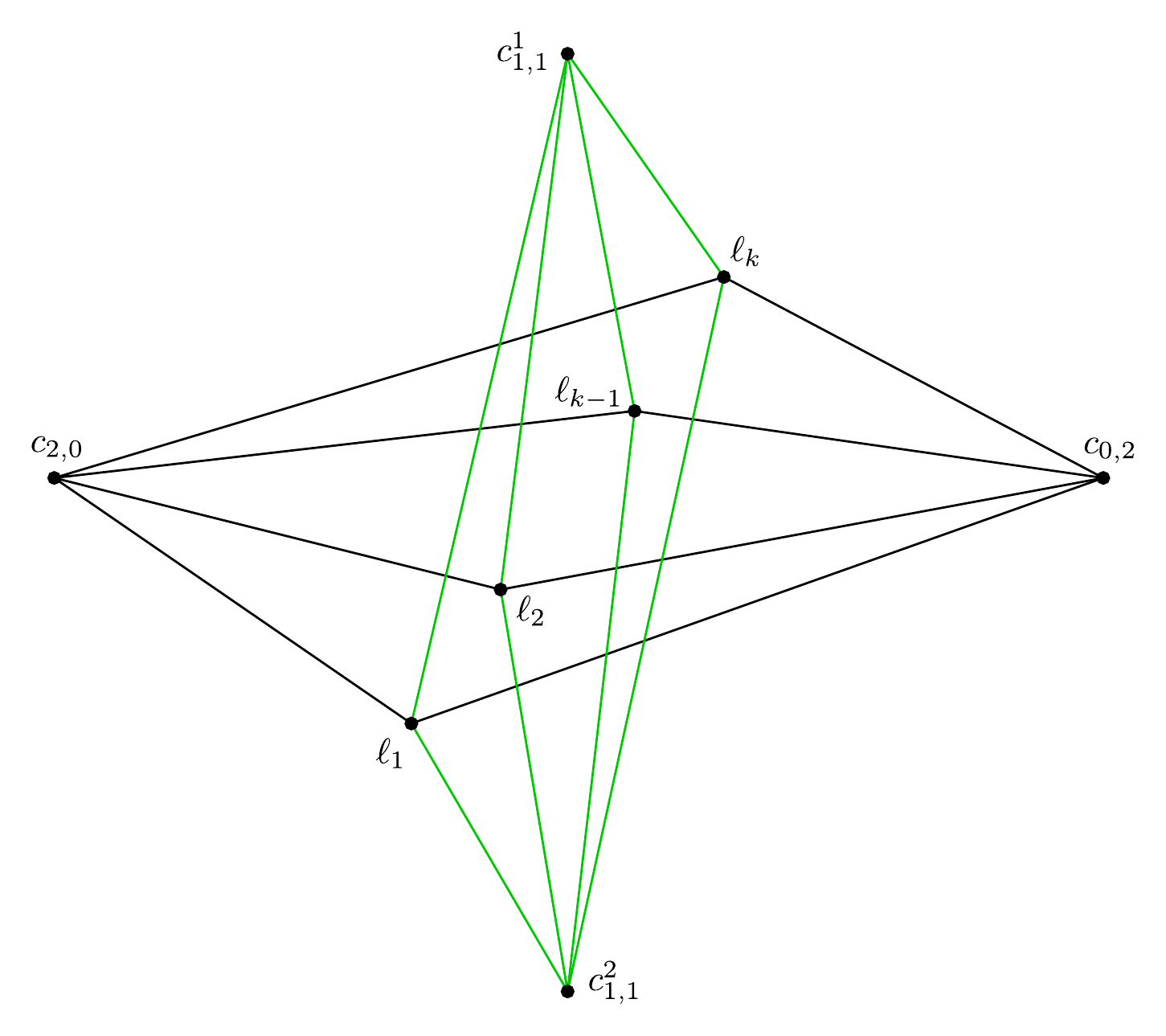} 
  \end{minipage}
  &
  \begin{minipage}{0.35\textwidth}
    $p = 12k+1$: \\
    Each black edge has length $6k$ and green edge has 
    length $1$, hence $l(\graph)=12k^2+2k$. Moreover \\
    $g(c_{2,0}) = g(c_{0,2}) = 0, \\
    g(c_{1,1}^i) = 3k^2 -3k + 1, \ g(\ell_i) = 6k$.
  \end{minipage}  
  \\ 
  
  \begin{minipage}{0.55\textwidth}
    \includegraphics[scale=0.6]{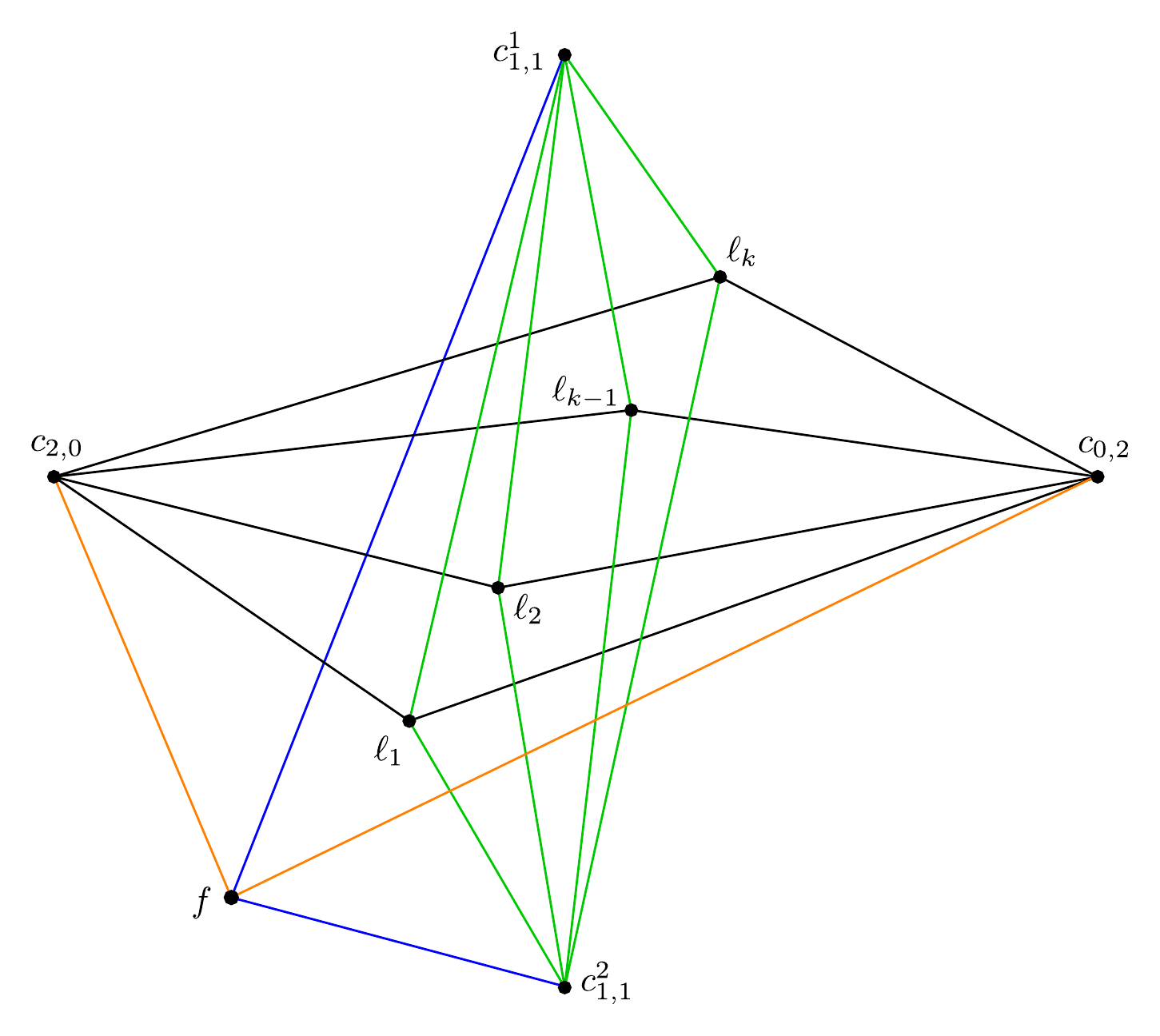} 
  \end{minipage}
  &
  \begin{minipage}{0.35\textwidth}
    $p=12k+5$: \\ 
    Each black edge has length $6k+2$, orange edge 
    length $18k+6$, blue edge length $3$ and green 
    edge length $1$, hence $l(\graph)=12k^2+42k+18$.
    Moreover \\
    $g(c_{2,0}) = g(c_{0,2}) = 0,\\ 
    g(c_{1,1}^i) = 3k^2 - k, \\
    g(\ell_i) = 6k+2,\ g(f)= 2k.$     
  \end{minipage}  
\end{tabular}

\begin{tabular}{ c m{0.35\textwidth} }
  \begin{minipage}{0.55\textwidth}
    \includegraphics[scale=0.6]{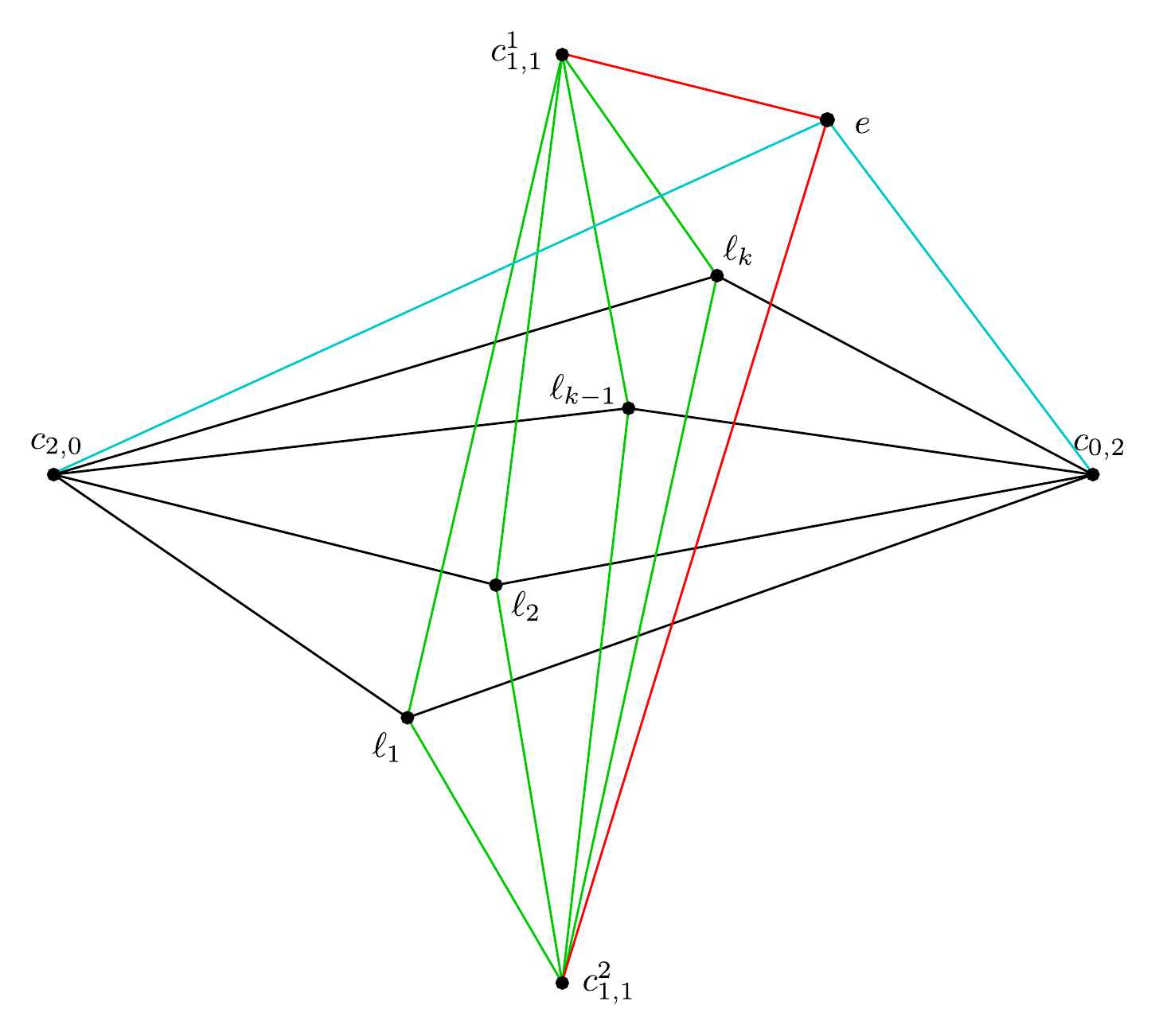}
  \end{minipage}
  &
  \begin{minipage}{0.35\textwidth}
    $p = 12k + 7$: \\
    Each black edge has length $6k+3$, cyan edge length 
    $12k+6$, red edge length 2 and green edge length 
    $1$, hence $l(\graph)=12k^2+32k+16$. Moreover \\
    $g(c_{2,0}) = g(c_{0,2}) = 0,\ 
    g(c_{1,1}^i) = 3k^2, \\ 
    g(\ell_i) = 6k+3,\ g(e)= 3k+1.$ 
  \end{minipage}  
  \\
  
  \begin{minipage}{0.55\textwidth}
    \includegraphics[scale=0.6]{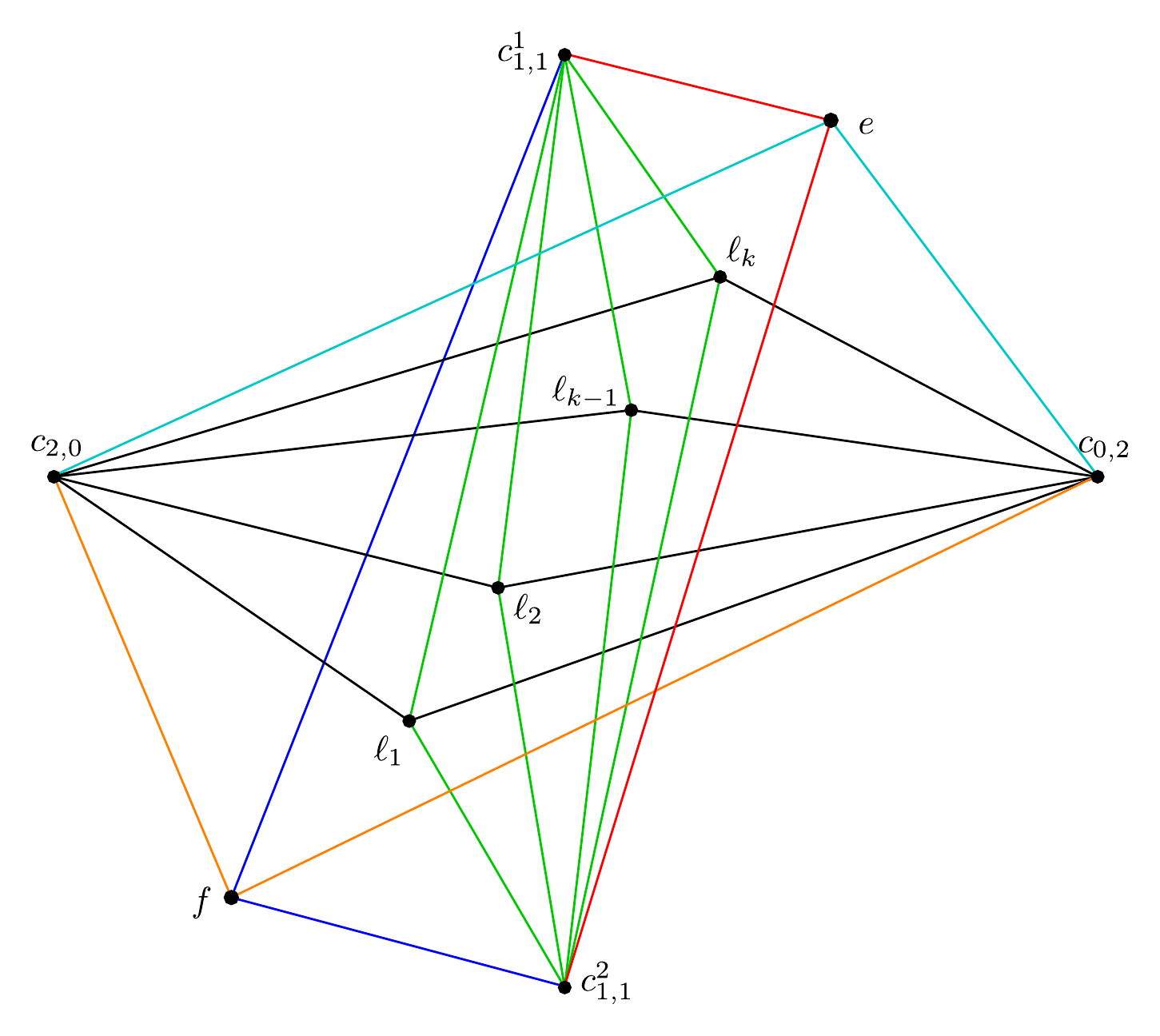}
  \end{minipage}
  &
  \begin{minipage}{0.35\textwidth}
    $p =12k + 11$: \\
    Each black edge has length $6k+5$, orange edge length 
    $18k+15$, cyan edge length $12k+10$, blue edge length 
    3, red edge length 2 and green edge length $1$,
    hence $l(\graph)=12k^2+72k+60$. \\ Moreover
    $g(c_{2,0}) = g(c_{0,2}) = 0, \\
    g(c_{1,1}^i) = 3k^2+2k,\ g(\ell_i) = 6k+5,\\
    g(e)= 3k+2,\ g(f) = 2k+1$.
  \end{minipage}
  \\
\end{tabular}

\subsection{Admissible pairing}

First we start with two propositions about the asymptotic behaviour of some quantities related to the  
dual graphs that we listed above. Let $\tau(\Ga)$ be the tau constant associated to a metrized graph $\Ga$  
as defined in \cite{Cinkir:Thesis}. 

\begin{proposition}
\label{lengthbound}
We have the following asymptotic estimate for $\tau(\graph)$ 
\[
\frac{8(\genus-1)}{(p^2-1)\genus^2}\tau(\graph) \rightarrow 0.
\]
\end{proposition}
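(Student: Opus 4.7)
The plan is to combine an elementary upper bound on the tau constant in terms of the total edge length with the explicit descriptions of the dual graphs from the previous subsection. Concretely, I will show that $\tau(\graph) = O(p^2)$, while the prefactor $\frac{8(\genus-1)}{(p^2-1)\genus^2}$ decays like $1/p^4$ since $\genus \sim p^2/12$, and hence the product tends to zero like $1/p^2$.

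For the upper bound on $\tau$, I would appeal to a result of the form $\tau(\Ga) \leq c\, l(\Ga)$ for some absolute constant $c > 0$ (in fact $c = 1/4$, with equality on trees, which follows directly from the definition of $\tau$ via the canonical Green's/voltage function on a metrized graph; this appears in Cinkir's thesis). Reading off the edge lengths from each of the four dual graphs drawn above, one checks uniformly in the residue class of $p$ modulo $12$ that
$$l(\graph) = 12k^2 + O(k) = \frac{p^2}{12} + O(p),$$
so $\tau(\graph) = O(p^2)$.

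On the other hand, the classical asymptotic $\genus = \frac{p^2}{12}\bigl(1 + o(1)\bigr)$ for the genus of $X_0(p^2)$ yields
$$\frac{8(\genus-1)}{(p^2-1)\genus^2} = O\!\left(\frac{1}{p^2\,\genus}\right) = O(1/p^4).$$
Multiplying this with the bound on $\tau(\graph)$ gives
$$\frac{8(\genus-1)}{(p^2-1)\genus^2}\,\tau(\graph) = O(1/p^2) \longrightarrow 0$$
as $p \to \infty$, which is the desired conclusion.

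The only point requiring care will be to cite or re-derive the precise upper bound on $\tau$ from Cinkir's thesis in a form consistent with our normalization; this is routine. Because the denominator is of order $p^4$ while $l(\graph)$ is only of order $p^2$, we have two full orders of magnitude of slack, so any crude polynomial bound of the shape $\tau(\Ga) \ll l(\Ga)$ is amply sufficient to close the argument, and I do not anticipate a substantive obstacle.
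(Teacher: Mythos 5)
Your proposal is correct and follows essentially the same route as the paper: bound $\tau(\graph)$ by a constant multiple of $l(\graph)$ (the paper cites the two-sided estimate $\frac{1}{16n}l(\Gamma)\leq\tau(\Gamma)\leq\frac{1}{4}l(\Gamma)$), then use $l(\graph)=12k^2+O(k)$ and $\genus\sim p^2/12$ to see the prefactor decays like $1/p^4$ against $\tau(\graph)=O(p^2)$. No substantive difference beyond the choice of reference for the bound on $\tau$.
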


\begin{proof}
Note that 
\[
  l(\graph)=12k^2+ak+b. 
\]
with $a,b \in \N$. 
Recall by \cite[Remark 2.1]{ddc1}, we  have 
\begin{equation} \label{eq:genus}
 \genus =1+\frac{(p+1)(p-6)-12c}{12}
\end{equation}
with $c \in \left\{1, \dfrac{1}{2}, \dfrac{1}{3}, \dfrac{1}{6}\right\}$.
From the above expression, it is easy to see that $\dfrac{8(\genus-1)}{(p^2-1)\genus^2}l(\graph) \rightarrow 0$ 
as $p \rightarrow \infty$. 

Recall that by \cite[Equation 14.3, p. 37]{MR2310616}, if $\Gamma$ is a graph with $n$ 
edges, then we have a bound on $\tau(\Gamma)$: 
\[
\frac{1}{16n } l(\Gamma) \leq  \tau(\Gamma) \leq \frac{1}{4}  l(\Gamma). 
\]
Hence, we obtain the asymptotic bound as in the proposition. 
\end{proof}

For any two vertices $x$ and $y$ of $\graph$, let $r(x,y)$ be the resistance between them. The resistance 
function is defined in Zhang \cite[Section 3]{\zhang}; see Proposition 3.3 of Zhang for properties of $r$.
Let us  denote
\[
\theta(x,y)=(v(x)-2+2 g(x)) (v(y)-2+2 g(y)) r(x,y). 
\] 
We also consider the following quantity associated to any metrized 
graph $\Ga$, as defined in \cite{Cinkir:Thesis}
\[
\tilde{\theta}(\Gamma)=\sum_{x,y \in V(\Gamma)} \theta(x, y).
\]

\begin{proposition}
\label{boundpdifcong}
We have the following asymptotic bound
\begin{equation*}
  \frac{1}{(p^2-1)\genus^2}\tilde{\theta}(\graph) = O\left(\frac{1}{p^2}\right).
\end{equation*}
\end{proposition}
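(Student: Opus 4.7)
The key observation is that $\theta(x,y)$ vanishes whenever either endpoint has $w(x) := v(x) - 2 + 2g(x) = 0$. Every interior vertex of the resolution chains $A_{l,i}, B_{l,i}$ (and their analogues in the other congruence classes) has $v = 2$ and $g = 0$, hence contributes nothing to $\tilde{\theta}(\graph)$. One may therefore replace $\graph$ by the reduced graph obtained by collapsing each such chain to a single weighted edge. This reduced graph has only $O(k) = O(p)$ relevant vertices: the cuspidal components $\wt{C}_0, \wt{C}_\infty$, the supersingular components $\wt{C}_{1,1}^j$, the $k$ vertices $\wt{L}_i$, and at most two extra elliptic vertices $\wt{E}, \wt{F}$ when $p \not\equiv 1 \pmod{12}$.

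The first ingredient is the combinatorial identity
\begin{equation*}
\sum_{x \in V(\graph)} w(x) \;=\; 2\bigl(b_1(\graph) - 1\bigr) + 2\sum_x g(x) \;=\; 2(\genus - 1),
\end{equation*}
which follows from the handshake lemma and the semistable genus formula $\genus = b_1(\graph) + \sum_x g(x)$. Combined with $w(x) \geq 0$, this gives
\begin{equation*}
\tilde{\theta}(\graph) \;\leq\; \Big(\max_{x,y} r(x,y)\Big) \cdot \Big(\sum_x w(x)\Big)^{\!2} \;=\; O(p^4) \cdot \max_{x,y} r(x,y),
\end{equation*}
the maximum running only over the finitely many relevant pairs.

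The second and harder ingredient is the bound $r(x,y) = O(1)$ uniformly on this finite set. Here one exploits the hub–arm structure of the dual graph laid out in Section~\ref{sec:semistable}: in each congruence class the $k$ arm vertices $\wt{L}_i$ are connected by long chains (of length $\sim p/2$) to $\wt{C}_0, \wt{C}_\infty$, and by short edges to $\wt{C}_{1,1}^j$ (and similarly to $\wt{E}, \wt{F}$ when present). A direct Kirchhoff computation on the resulting $K_{k,4}$-like bipartite network, using the symmetry among the $\wt{L}_i$'s to reduce to a four-unknown linear system, shows that the $\Theta(p)$ parallel paths through the arms collapse every effective resistance to $O(1)$. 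In fact the pair $(\wt{C}_{1,1}^1, \wt{C}_{1,1}^2)$ even satisfies $r = O(1/p)$ through the $k$ parallel length-$2$ paths via the green edges. This must be verified in each of the four cases $p \bmod 12$, but the argument is uniform.

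Combining the two ingredients gives $\tilde{\theta}(\graph) = O(p^4)$, and dividing by $(p^2 - 1)\genus^2 = \Theta(p^6)$ yields the claimed $O(1/p^2)$. The main obstacle is the uniform resistance bound: the naive diameter estimate $r(x,y) \leq l(\graph) = O(p^2)$ only gives $\tilde{\theta}(\graph)/((p^2-1)\genus^2) = O(1)$, which is insufficient, so the proof genuinely requires exploiting the $\Theta(p)$ parallel arms of the reduced graph rather than treating it as a generic metric graph.
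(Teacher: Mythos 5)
Your proposal is correct in outline but takes a genuinely different route from the paper, and its one essential step is only sketched. The paper's proof is more pedestrian: it keeps the pairwise bookkeeping, bounding each $r(x,y)$ by the length of a shortest path joining $x$ and $y$ and multiplying by the explicit weights $w(x)=v(x)-2+2g(x)$ read off from the dual graphs (for $p=12k+1$, for instance, $\theta(\ell_i,\ell_j)\le 2(12k+2)^2$, $\theta(c_{1,1}^1,c_{1,1}^2)\le 2(6k^2-5k)^2$, $\theta(c_{1,1}^j,c_{2,0})=O(k^4)$, and so on); summing over the $O(k^2)$ relevant pairs gives $\tilde{\theta}(\graph)=O(p^4)$, hence the claim after dividing by $(p^2-1)\genus^2=\Theta(p^6)$. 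This works because the only pair in which both weights are of size $\Theta(p^2)$, namely the two hubs $c_{1,1}^1,c_{1,1}^2$, is at bounded distance, while pairs at distance $\Theta(p)$ always have at least one endpoint of weight $O(p)$. So your closing claim that the proposition ``genuinely requires'' a Kirchhoff analysis of the parallel arms is not accurate: it is only your cruder aggregation $\tilde{\theta}(\graph)\le \big(\max_{x,y} r(x,y)\big)\big(\sum_x w(x)\big)^2$, together with the (correct) identity $\sum_x w(x)=2(\genus-1)$, that forces you to need the uniform bound $\max r=O(1)$, which a per-pair shortest-path estimate avoids entirely. Your route does work, but the $O(1)$ resistance bound is the whole content of your proof and is merely asserted (``a direct Kirchhoff computation \ldots shows''), so it must be supplied; fortunately no linear system is needed. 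By Rayleigh monotonicity, keeping only the $k$ disjoint paths $c_{2,0}$--$\ell_j$--$c_{1,1}^1$, each of length $6k+O(1)$, gives $r(c_{2,0},c_{1,1}^1)=O(1)$, and likewise for $c_{0,2}$ and for the hub $c_{1,1}^2$; the two hubs are joined by $k$ parallel paths of length $2$, so $r(c_{1,1}^1,c_{1,1}^2)=O(1/k)$; every remaining positive-weight vertex ($\ell_i$, and $e$, $f$ when present) lies at bounded distance from a hub; the triangle inequality for the resistance metric then yields $r(x,y)=O(1)$ uniformly for all relevant pairs in all four congruence classes. With that inserted your argument is complete, and its advantage is robustness: it uses only $\sum_x w(x)=2(\genus-1)$ and the hub--arm structure, rather than the explicit weight and length numerology that the paper's pairwise estimate relies on.
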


\begin{proof} 
 For any two vertices $x,y$ of a metrized graph $\Ga$, $r(x,y)$ is bounded above by the length 
of the shortest path between the vertices  \cite[p. 15, Exercise 12]{MR2277605}. Let $p = 12k+1$, then
\begin{equation*}
\begin{aligned}
  \theta(l_i,l_j)  & \leq 2(12k + 2)^2, \\ 
  \theta(l_i, c_{2,0}) = \theta(l_i, c_{0,2}) & \leq (k - 2)(12k + 2)(6k), \\ 
  \theta(l_i, c_{1,1}^1) = \theta(l_i, c_{1,1}^2)  & \leq (12k + 2)(6k^2 - 5k), \\
  \theta(c_{2,0},c_{0,2}) & \leq (12k)(k-2)^2, \\
  \theta(c_{1,1}^i,c_{0,2}) = \theta(c_{1,1}^i,c_{2,0}) & \leq (6k-1)(k-2)(6k^2-5k), \\
  \theta(c_{1,1}^1,c_{1,1}^2) & \leq 2(6k^2-5k)^2.
\end{aligned}  
\end{equation*}
The result now follows by summing everything up and dividing by $\genus^2(p^2-1)$ and noting that
$\genus = O(p^2)$. In all other cases the proof is similar.
\end{proof}

Let $\ov\omega_{a, \cX_0(p^2)/\cO_K}$ be the admissible metrized relative dualising sheaf of the curve $\cX(p^2)/\cO_K$; 
see Zhang \cite{\zhang} for definition.

\begin{lemma}
\label{localadmissiblepairing1}
We have the following asymptotic expression for admissible self-intersection 
numbers for the modular curves $X_0(p^2)$:
\[
  \adselfinter = 2 \genus \log(p^2)+o(\genus \log(p)).
\] 
\end{lemma}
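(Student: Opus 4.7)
The plan is to deduce the admissible self-intersection from the Arakelov self-intersection computed in Theorem~\ref{thm:self-inter}, by controlling the correction via Zhang's comparison between the two pairings.

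The main tool is Zhang's comparison formula \cite[Theorem~5.5]{\zhang}, which for a semi-stable arithmetic surface reads
\[
\arsq{\ov\omega_{\cX_0(p^2)/\cO_K}} - \big(\ov\omega_{a,\cX_0(p^2)/\cO_K}\big)^2 = \sum_{\prid} \varepsilon(\Gamma_{\prid})\log\#k(\prid),
\]
where the sum runs over the finite places of $\cO_K$, $\Gamma_{\prid}$ is the vertex-weighted metrized reduction graph at $\prid$, and $\varepsilon(\Gamma)$ is Zhang's non-negative combinatorial invariant. Only the places $\prid_i$ contribute, and since the $\prid_i$ are Galois conjugate every $\Gamma_{\prid_i}$ is isomorphic to $\graph$. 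After dividing by $[K:\qq] = \varphi(p+1)(p^2-1)/2$, this collapses to
\[
\adselfinter = \selfinter - \frac{2\,\varepsilon(\graph)}{p^2-1}\log(p^2).
\]

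To handle the correction, I would invoke the bounds of Cinkir \cite{Cinkir:Thesis}, which express $\varepsilon(\Gamma)$ as a non-negative linear combination of $\tau(\Gamma)$ and $\tilde\theta(\Gamma)/(g-1)$ with absolute coefficients. Proposition~\ref{lengthbound} gives $\tau(\graph)/(p^2-1) = o(\genus)$, so the $\tau$-contribution to $\varepsilon(\graph)\log(p^2)/(p^2-1)$ is $o(\genus\log p)$. Proposition~\ref{boundpdifcong}, combined with $\genus = O(p^2)$ coming from \eqref{eq:genus}, gives $\tilde\theta(\graph)/((\genus-1)(p^2-1)) = O(\genus/p^2) = O(1)$, so the $\tilde\theta$-contribution is $O(\log p) = o(\genus\log p)$. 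Therefore
\[
\frac{2\,\varepsilon(\graph)}{p^2-1}\log(p^2) = o(\genus\log p).
\]

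Combining this with Theorem~\ref{thm:self-inter}, whose error term $\tfrac{p}{8}\log(p^2) + o(p\log(p^2))$ is absorbed into $o(\genus\log p)$ since $p = o(\genus)$, immediately yields $\adselfinter = 2\genus\log(p^2) + o(\genus\log p)$. The principal obstacle is identifying the precise form of Zhang's comparison formula together with Cinkir's explicit bound on $\varepsilon(\Gamma)$ carrying the correct $1/(g-1)$ weighting on the $\tilde\theta$ term; once this ingredient is in place, the conclusion follows from a routine combination of the two preceding propositions with the Arakelov asymptotics of Section~\ref{sec:self-inter}.
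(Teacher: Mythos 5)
Your argument is essentially the paper's proof: the paper likewise invokes the Zhang--Cinkir comparison (\cite[Theorem 5.5]{\zhang}, \cite[Theorem 4.45]{Cinkir:Thesis}), which gives the correction at each place $\prid_i$ explicitly as $\frac{4(\genus-1)}{\genus}\tau(\graph)+\frac{1}{2\genus}\tilde\theta(\graph)$ times the local logarithm, and then concludes from Theorem~\ref{thm:self-inter} together with Propositions~\ref{lengthbound} and~\ref{boundpdifcong}, exactly as you do. Your minor bookkeeping differences (an extra factor of $2$ in the normalized correction and writing the $\tilde\theta$-weight as $1/(\genus-1)$ rather than $1/(2\genus)$) are immaterial to the $o(\genus\log p)$ estimate, so the proposal is correct.
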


\begin{proof}
Recall the following Theorem that connects admissible self-intersection number with the arithmetic 
self-intersection number  \cite[Theorem 4.45]{Cinkir:Thesis}, \cite[Theorem 5.5]{\zhang}:
\[
  \adselfinter=\selfinter - \frac{1}{[K:\qq]} \sum_{i=1}^{\varphi(p+1)/2} 
  \left(\frac{4(\genus-1)}{\genus}\tau(\graph)+\frac{1}{2\genus} \tilde{\theta}(\graph)\right) \log p. 
\]
Hence the result now follows from Theorem~\ref{thm:self-inter}, and Propositions \ref{lengthbound} and 
\ref{boundpdifcong}.
\end{proof}

\begin{proof}[Proof of Theorem~\ref{thm:effectivebogomolov}]
Since $\hNT(x) \geq 0$ for all $x$, we note that by ~\cite[Theorem 5.6] {\zhang}
\[
  a'(D) \geq  
  \frac{\adselfinter}{4(\genus-1)}
  + \frac{2 \genus - 2}{g_p^2} \hNT\left(D - \frac{\fK_{X_0(p^2)}}{2\genus-2}\right)
  \geq \frac{1}{2} a'(D)
\]
for the divisor $D=\infty$ as in the introduction and $\fK_{X_0(p^2)}$ a canonical divisor of 
the Riemann surface $X_0(p^2)$. Let $h = \hNT\left(D - \dfrac{\fK_{X_0(p^2)}}{2\genus-2}\right)$, then by an argument 
similar to Section 6 of Michel-Ullmo \cite{MR1614563}, we can show that
\begin{equation} \label{eq:height}
  h = \frac{1}{\genus^2}O(\log p).
\end{equation}

By Zhang's theorem 
\[
 \hNT(\phi_D(x)) \leq a'(D)-\epsilon
\]
holds for only finitely many $x$. We deduce that
\[
  F_2=\left\{x \in X_0(p^2)(\ov\qq) \mid \hNT (\varphi_D(x)) 
  < \frac{\adselfinter}{4(\genus-1)} + h -\epsilon \right \}
\]
is finite. 

On the other hand, the set 
\[
  F_1=\left\{x \in X_0(p^2)(\ov\qq) \mid \hNT (\varphi_D(x)) 
  \leq  \frac{\adselfinter}{2(\genus-1)} + 2h + \epsilon \right\}
\]
is infinite. From Lemma~\ref{localadmissiblepairing1}, we conclude that for large enough $p$: 
\[
  2 - \epsilon < 
  \frac{\adselfinter}{\genus \log(p^2)}
  < 2 + \epsilon.
\]

The result now follows because of the bound on $h$ given by \eqref{eq:height}.
\end{proof}

\appendix
	\section{Small primes}
For the prime $5$, $X_0(25)$ has genus 0.
The next prime is $7$ in which case $g(X_0(49))$ is 1. From the calculations of section 
\ref{subsec:7mod12} it follows that the special fibers of the minimal regular model of $X_0(49)$ 
over $\cO_K$ is a single genus 1 curve hence in fact smooth, this is the component $\wt{E}$, all 
other components can be blown down.

For $p = 11$ the regular minimal model of $\cX_0(121)/\cO_K$ is as described in 
\ref{subsec:11mod12}.

Finally when $p =13$ we see that the special fibers of $\cX_0(169)/\cO_K$ is described by the following 
diagram. We have to contract $\wt{C}_{2,0}$ and $\wt{C}_{0,2}$ since these are genus $0$ 
components with self-intersection $-1$. 
\begin{center}
\includegraphics[scale=.5]{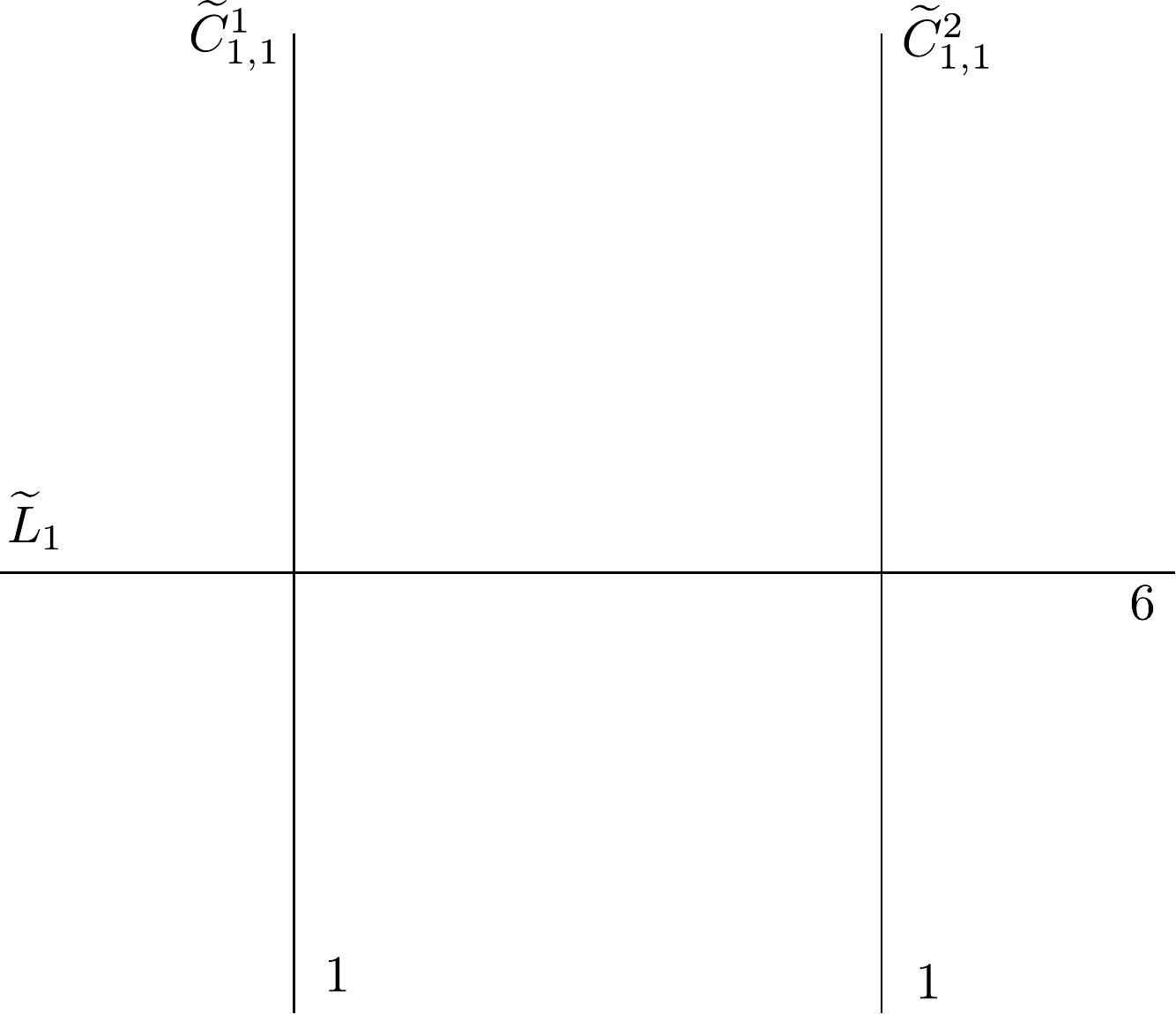}
\end{center}

\bibliographystyle{crelle}
\bibliography{Eisensteinquestion.bib}
\end{document}